\renewcommand{\P}{\mathbb{P}}
\newcommand{\bE}{{\mathbb E}}
\newcommand{\bR}{{\mathbb R}}
\newcommand{\bZ}{{\mathbb Z}}
\newcommand{\cC}{{\mathcal C}}
\newcommand{\cG}{{\mathcal G}}
\newcommand{\cR}{{\mathcal R}}
\newcommand{\cB}{\mathcal B}
\newcommand{\cI}{\mathcal I}
\newcommand{\balpha}{\overline\alpha}
\newcommand{\red}{\widetilde{R}}
\newcommand{\blue}{\widetilde{B}}
\newcommand{\Len}{\mathrm{Len}}
\numberwithin{equation}{section}
\definecolor{ashgrey}{rgb}{0.7, 0.75, 0.71}
\definecolor{faint}{rgb}{0.9, 0.9, 0.9}
\definecolor{lgrey}{rgb}{0.8, 0.8, 0.8}
\definecolor{dgrey}{rgb}{0.5, 0.5, 0.5}
\newtheorem{theorem}{Theorem}[section]
\newtheorem{lemma}[theorem]{Lemma}
\newtheorem{corollary}[theorem]{Corollary}
\newtheorem{open}[theorem]{Open problem}
\newcommand{\prob}[1]{\mathbb{P}\left(#1\right)}
\newcommand{\probsub}[2]{\mathbb{P}_{#1}\left(#2\right)}
\begin{document}

\begin{center}\Large
{\bf {Competing deterministic growth models in two dimensions}}
\end{center}

\begin{center}
{\sc Janko Gravner}\\
{\rm Department of Mathematics}\\
{\rm University of California}\\
{\rm Davis, CA 95616}\\
{\rm \tt gravner{@}math.ucdavis.edu}
\end{center}

\begin{center}
{\sc David Sivakoff}\\
{\rm Departments of Statistics and Mathematics}\\
{\rm The Ohio State University}\\
{\rm Columbus, OH 43210, USA}\\
{\rm \tt dsivakoff{@}stat.osu.edu}
\end{center} 

%\thanks{The authors were partially supported on this project 
%as follows: }

%\begin{document}

%\maketitle

\begin{abstract} 
We consider three-state cellular automata in two dimensions in 
which two colored states, blue and red, compete for control 
of the empty background, starting from low initial densities $p$ and $q$. When the dynamics of both colored types
are one-dimensional, the dynamics has three distinct phases, characterized
by a power relationship between $p$ and $q$: 
two in which one of the colors is prevalent, and one when the colored 
types block 
each other and leave most of the space forever empty. When 
one of the colors spread in two dimensions and the other in 
one dimension, we also establish a power relation between $p$ and $q$ 
that characterizes
which of the two colors eventually controls most of the space. 
\end{abstract}

\begin{comment}
References:
\begin{itemize}
    \item Polluted BP (GM, GH, GHS, Ghosh thesis)
    \item Candellero, Stauffer (FPP in hostile environment is not monotone)
    \item Finn, Stauffer
    \item Sidoravicius, Stauffer
    \item 
\end{itemize}

Introduction:
\begin{itemize}
    \item competing growth in prob literature
    \item physics a paper to motivate from physics perspective
    \item polluted models: obstacles are zero-dimensional
    \item our line paper to motivate nucleation of lines
    \item open problem: exact constants, 3 (or more) color models.
\end{itemize}
\end{comment}

\section{Introduction}\label{sec:intro}
Competing growth models were introduced in the probabilistic literature 
in the 1990s as a class of deterministic cellular automata 
called threshold voter models \cite{GG}, and as multiple-type first passage percolation dynamics \cite{HP1,HP2}. The latter processes have been explored further in \cite{Hof, SS, CS, FS1,FS2}. In these models, two types of particles compete for space using simple infection rules, which may not be the same for both types; for example, one of the types may spread faster or require ``activation.'' Such models are intrinsically non-monotone, which poses a challenge in rigorous analysis. Another competition between two types of particles, introduced in \cite{GM}, is when 
one of the types is inert and simply provides a random environment --- a field of obstacles --- in which the other type tries to spread. See 
\cite{GH, GHS, Gho} for recent progress on such dynamics. In the 
context of the present work, these obstacles undergo degenerate, zero-dimensional, growth. By contrast, \cite{GG} studies cases
when the types both spread in two dimensions (with sufficient 
regularity). It is thus natural to consider the remaining possibility 
whereby the 
two growths have different non-zero dimensions and we
investigate such competing growth models on the planar
lattice; the two types may
in addition have different range and spread at different speeds. 
Our motivation comes from the earlier work on models that combine one- and 
two-dimensional growth features \cite{BGSW}. Recent physical literature also includes experiments and mathematical 
models of crystal growth that may vary the dimensionality depending 
on nucleation conditions \cite{YLG+} and in particular 
features competition between one-dimensional crystals growing 
in different directions \cite{CLY+, YLG+}, which warrants further mathematical investigation of such dynamics.

In our model, each vertex of $\bZ^2$ is independently initially 
\begin{equation}\label{eq:initial state}
\begin{cases}
\text{blue} & \text{with probability } p\\
\text{red} & \text{with probability } q\\
\text{empty} & \text{with probability } 1-p-q.
\end{cases}
\end{equation}
Let $\cB\subset \bZ\times \{0\}$ and $\cR\subset \bZ^2$ be finite subsets of the lattice,
%Then vertex colors evolve according to the following rules in the \textbf{basic model}. If $x\in \bZ^d$ is red or blue at step $t$, then it remains that color forever. If $x\in \bZ^d$ is empty and has at least one blue neighbor in $x+\cB$ at step $t$, then it becomes blue. If $x\in \bZ^d$ is empty and has at least one red neighbor in $x+\cR$ at step $t$, then it becomes red. If there is a tie, then $x$ flips a fair coin to decide whether to become red or blue at the next step. Otherwise, $x$ remains empty.
and fix an update interval for red  $r >0$. At times $t\in\{1, 2, 3, \ldots\}$, every empty vertex $x\in \bZ^2$ that has a blue neighbor in $x+\cB$ becomes blue. At times $t\in \{r, 2r,3r, \ldots\}$, every empty vertex $x\in \bZ^2$ that has a red neighbor in $x+\cR$ becomes red. Ties are decided according to independent fair coin flips. In this way, our model is a competition between two deterministic first-passage processes, where blue spreads via its neighborhood at speed $1$, and red spreads via its neighborhood at speed $1/r$.

We denote the resulting configuration at time $t\in [0,\infty)$ by $\xi_t$. As 
every site in $\bZ^2$ changes state at most once, we may define the {\em final configuration} $\xi_\infty$, in which every site has its final state. Properties of this random configuration are of 
our primary interest. 

%Our first results are for the nearest-neighbor growth rules. Let $e_1, e_2$ denote the standard basis vectors.

Our first result is on competing one-dimensional processes, resembling the 
dynamics in \cite{CLY+}, in which red grows vertically with range $\rho$ and blue grows horizontally with range $\tau$. 

%Consider the competing one-dimensional growth processes, where red grows vertically with range $\rho$ and blue grows horizontally with range $\tau$. That is, $\cR = \{0\}\times ([-\rho,\rho]\cap \bZ)$ and $\cB = ([-\tau,\tau]\cap \bZ)\times \{0\}$. Since it will make no difference at our level of precision, we assume also that blue and red grow at the same rates, so $r=1$. Then we have the following theorem.

\begin{figure}
    \centering
    \includegraphics[width=0.45\linewidth]{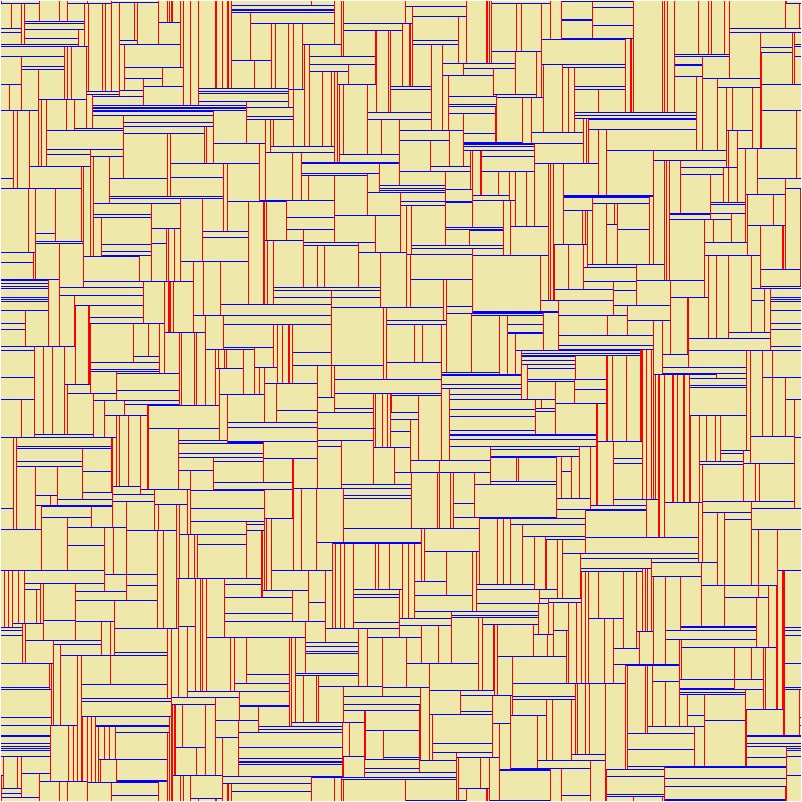}\hskip0.3cm
    \includegraphics[width=0.45\linewidth]{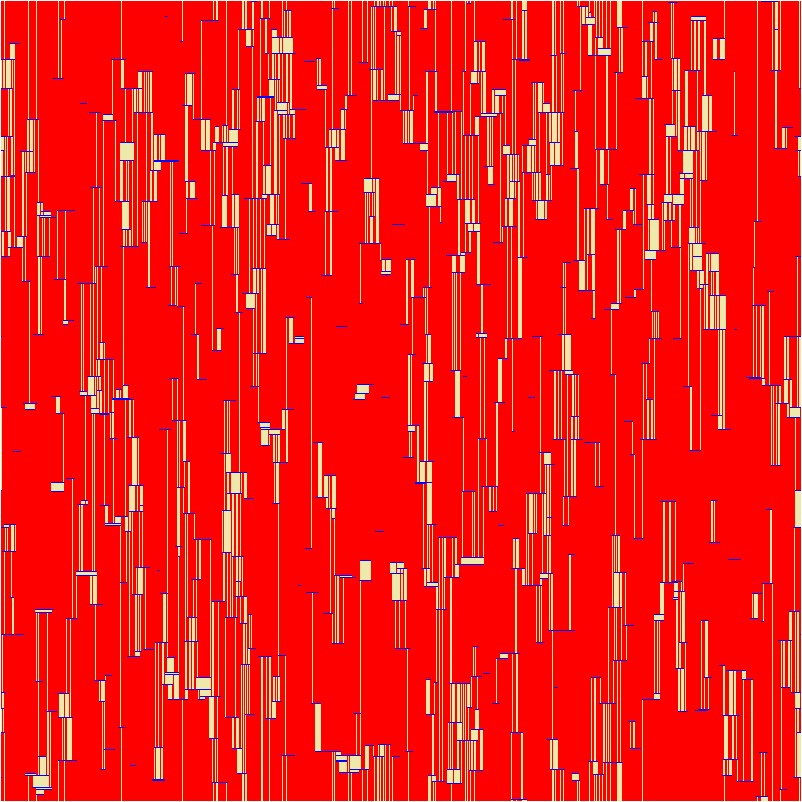}       
    \caption{Illustration of Corollary~\ref{thm:1d three phases} when $\rho=\tau=1$: final configurations on an $800\times 800$ torus when $p=0.001$, $q=0.001$ (left) and $q=0.02$ (right).}
    \label{fig:1vs1}
\end{figure}

\begin{theorem}\label{thm:1d long-range} Assume that
$\cR = \{0\}\times ([-\rho,\rho]\cap \bZ)$ and $\cB = ([-\tau,\tau]\cap \bZ)\times \{0\}$, with $\rho,\tau \in \bZ_{\ge1}$. If $q = a p^{\gamma}$ where $\gamma = \frac{1}{\tau} + \frac{\rho}{\rho+1}$, then
   $$
0<\liminf_{p\to 0} \P(0 \text{ is eventually blue}) \le \limsup_{p\to 0} \P(0 \text{ is eventually blue}) < 1.
$$
Moreover, the $\limsup$ tends to $0$ as $a \to \infty$ and the $\liminf$ tends to $1$ as $a \to 0$.
\end{theorem}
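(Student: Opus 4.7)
The plan is to reduce the problem to a self-consistent analysis of two coupled blocking phenomena: walls of $\tau$ consecutive red cells in row $0$ that block the horizontal growth of blue from a row-$0$ seed, and walls of $\rho$ consecutive blue cells in column $0$ that block the vertical growth of red from a column-$0$ seed. The exponent $\gamma = 1/\tau + \rho/(\rho+1)$ will emerge from balancing these two effects.

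First I would estimate the success probability of a red seed at $(0,y)$ in reaching the origin. Red descends through column $0$ at rate $\rho/r$, and at step $k$ must not find a wall of $\rho$ consecutive blue cells just below its front. At the relevant early time scales, the probability that $(0,z)$ has become blue by time $r(y-z)/\rho$ is well-approximated by $2\tau p r(y-z)/\rho$: blue needs only a seed in row $z$ within horizontal distance $\tau r(y-z)/\rho$, and red blocking of blue is negligible on this short time window. The probability of a blue $\rho$-wall centered at height $z$ is then of order $(2\tau p r(y-z)/\rho)^\rho$; summing $z$ over $[0,y]$ yields an expected wall count of order $p^\rho y^{\rho+1}$, and the red seed succeeds with probability $\exp(-C_1 p^\rho y^{\rho+1})$ for an explicit constant $C_1 = C_1(\tau,\rho,r)$. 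Integrating against the red seed density $q$ gives that the expected number of successful red seeds in column $0$ is of order $q\, p^{-\rho/(\rho+1)}$, and by Poisson-style independence this quantity (call it $P_R$) equals, up to constants, the effective density of red in row $0$ at late times.

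Next I would apply the symmetric analysis to blue in row $0$, using the $P_R$ computed above. A blue seed at $(x_B,0)$ reaches the origin iff no $\tau$-wall of red forms in row $0$ along its path. Since the probability that column $x$ is red in row $0$ at time $(x_B-x)/\tau$ saturates to $P_R$ for large $x_B-x$, the rate of $\tau$-walls along the path is $P_R^\tau$, and blue succeeds with probability of order $\exp(-C_2 |x_B| P_R^\tau)$. The expected number of successful blue seeds is therefore of order $p/P_R^\tau = p^{1+\rho\tau/(\rho+1)}/q^\tau$, which is of constant order precisely when $q = a p^\gamma$ with $\gamma = 1/\tau + \rho/(\rho+1)$, equalling a constant multiple of $a^{-\tau}$. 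A Poisson/second-moment approximation then gives $\P(0 \text{ is eventually blue}) \sim 1 - \exp(-c\, a^{-\tau})$ uniformly as $p\to 0$ for an explicit $c>0$; combined with a separate verification that red reaches the origin with vanishing probability at this $\gamma$, one obtains that the blue arrivals actually yield blue at the origin.

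The main obstacle will be closing the circular dependence between blue and red blocking: red growth in a column requires unblocked blue growth in that column, but blue growth in a row in principle suffers red blocking. The key self-consistency check is that at $\gamma = 1/\tau + \rho/(\rho+1)$ the seed-height scale $y \sim p^{-\rho/(\rho+1)}$ relevant to successful reds is much shorter than the reciprocal of $P_R^\tau$, so over the time window needed for red to descend the effective red density in rows traversed by blue is $o(1)$, and the naive blue propagation estimate used in the column analysis is self-consistently valid. The lower bound on $\P(0\text{ is eventually blue})$ then requires a second moment calculation on the number of successful blue seeds, exploiting that distinct seeds have essentially disjoint column environments and thus weakly correlated blocking events; the upper bound combines a union bound over seed positions with the preceding estimate ruling out red first-arrivals. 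Tracking $a$ through the Poisson approximation yields the stated limits as $a\to 0$ and $a\to\infty$.
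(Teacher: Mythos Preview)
Your heuristic correctly identifies $\gamma$, but there is a structural gap. For the lower bound on $\P(0\text{ eventually blue})$ you need an \emph{upper} bound on the effective red density $P_R$ in row $0$, which in turn needs a \emph{lower} bound on blue's ability to form $\rho$-walls in each column. Your estimate $\P((0,z)\text{ blue by time }r(y-z)/\rho)\approx 2\tau p\,r(y-z)/\rho$ assumes blue spreads freely; but ignoring red blocking of blue \emph{over}-estimates blue's reach, hence over-estimates blue $\rho$-walls, hence \emph{under}-estimates $P_R$. So the inequality points the wrong way exactly where you need it. The self-consistency remark (that the descent scale $p^{-\rho/(\rho+1)}$ is short compared to $1/P_R^\tau$) is a plausibility check, not a bound; turning it into a proof would require an honest iteration or a monotone coupling you have not supplied. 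Separately, the ``second-moment on the number of successful blue seeds'' is the wrong tool: for seeds on one side of the origin the success events are nested (the seed at $x$ reaches $0$ only if every closer seed does), so $\{N\ge 1\}$ is simply $\{\text{nearest seed succeeds}\}$, and for the upper bound the union bound $\P(0\text{ blue})\le \bE N\asymp a^{-\tau}$ is vacuous for small $a$.

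The paper avoids the circularity by replacing probabilistic estimates of success with explicit configuration events that have deterministic consequences. For the lower bound it tiles a strip of height $N\asymp p^{-\rho/(\rho+1)}$ by $N\times N$ boxes, calls a box \emph{successful} if it contains $\rho$ consecutive rows each with an initially blue seed (so $\P(\text{success})\ge 1/2$), and observes that a successful box deterministically stops any red above it; the only danger is a gap of $\ell$ unsuccessful boxes, and a first-moment bound (using $\bE Z_k^{\tau+1}<\infty$) shows that with high probability no such gap contains $\tau$ adjacent columns with red seeds, in which case the flanking successful boxes grow across the gap in time. For the upper bound it exhibits, on each side of the origin, $\tau$ consecutive columns in a box of height $\epsilon p^{-\rho/(\rho+1)}$ each containing an initially red seed, with no $\rho$ consecutive blue-seeded rows nearby; this event has probability bounded below for every $a>0$ and deterministically produces a permanent $\tau$-wall in row $0$. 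Both constructions decouple the blue and red randomness and never require estimating one color's success conditional on the other's dynamics.
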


We conclude that, as $q$ increases compared to $p$, 
the dynamics can be in three distinct phases, dominated by blue, neither, 
or red. For example, when $\rho=\tau=1$, the three phases occur when $q\ll p^{3/2}$, 
$p^{3/2}\ll q\ll p^{2/3}$ and $q\gg p^{2/3}$. See Figure~\ref{fig:1vs1}
for an illustration.

\begin{corollary}\label{thm:1d three phases} Assume the same setting 
as in the above Theorem~\ref{thm:1d long-range}, and additionally
denote $\gamma'=\frac{1}{\rho} + \frac{\tau}{\tau+1}$. Then $\gamma\gamma'>1$ and
\begin{itemize}
    \item if $q\ll p^\gamma$, $\P(0 \text{ is eventually blue})\to 1$;
    \item if $p^\gamma\ll q\ll p^{1/\gamma'}$, $\P(0 \text{ remains empty for all time})\to 1$; and 
    \item if $p^{1/\gamma'}\ll q$, $\P(0 \text{ is eventually red})\to 1$.
\end{itemize}
\end{corollary}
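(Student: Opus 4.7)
The plan is to derive the corollary from Theorem~\ref{thm:1d long-range} itself, its symmetric analogue, and a monotonicity lemma in the densities $p,q$.

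The algebraic inequality $\gamma\gamma'>1$ follows by expansion: writing $\gamma=\frac{1}{\tau}+1-\frac{1}{\rho+1}$ and $\gamma'=\frac{1}{\rho}+1-\frac{1}{\tau+1}$ and multiplying out, the mixed terms cancel to give
\[
\gamma\gamma' \;=\; 1 + \frac{1}{\rho\tau} + \frac{1}{(\rho+1)(\tau+1)} \;>\; 1,
\]
so the window $p^{\gamma}\ll q\ll p^{1/\gamma'}$ is non-empty for small $p$. A $90^{\circ}$ rotation of $\bZ^{2}$ composed with the swap of blue and red labels (and a rescaling of time so that the new ``blue'' updates at integer steps) converts the original dynamics into another instance of the same model, with the roles of $\rho$ and $\tau$ exchanged. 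Applying Theorem~\ref{thm:1d long-range} to this rotated/swapped copy yields the symmetric statement: if $p=b q^{\gamma'}$, then $0<\liminf_{q\to 0}\P(0\text{ eventually red})\le\limsup_{q\to 0}\P(0\text{ eventually red})<1$, with both limits tending to $1$ as $b\to 0$ and to $0$ as $b\to\infty$.

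I would then establish a monotonicity lemma: if $p\le p'$ and $q\ge q'$, the two dynamics can be coupled so that at every update time, the blue sites in the $(p',q')$ copy contain those in the $(p,q)$ copy, while the red sites are contained in those of the $(p,q)$ copy. The coupling thins the denser initial configuration in each color independently and uses shared tie-breaking coins at every site-time. An induction on update times handles non-interacting updates immediately; the only subtlety is a blue-red tie at a site $x$ empty in the $(p',q')$ copy, where the inductive containments force the $(p,q)$ copy at $x$ to be either (i) already red (so $x$ is red there at the next step), (ii) red-only eligible (so $x$ becomes red), or (iii) a blue-red tie (where the shared coin resolves consistently). In each case both containments propagate to the next step, so in particular $\P(0\text{ eventually blue})$ is non-increasing in $q$ and non-decreasing in $p$, and $\P(0\text{ eventually red})$ is non-increasing in $p$ and non-decreasing in $q$.

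Given $\eps>0$, Theorem~\ref{thm:1d long-range} supplies constants $a_0<a_1$ with $\P(0\text{ blue}\mid q=a_0 p^{\gamma})>1-\eps$ and $\P(0\text{ blue}\mid q=a_1 p^{\gamma})<\eps$ for all sufficiently small $p$. If $q\ll p^{\gamma}$, then eventually $q<a_0 p^{\gamma}$, so monotonicity gives $\P(0\text{ blue})\ge\P(0\text{ blue}\mid q=a_0 p^{\gamma})>1-\eps$; if $q\gg p^{\gamma}$, monotonicity similarly forces $\P(0\text{ blue})<\eps$. The symmetric statement applied analogously gives $\P(0\text{ red})>1-\eps$ when $q\gg p^{1/\gamma'}$ and $\P(0\text{ red})<\eps$ when $q\ll p^{1/\gamma'}$. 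In the middle regime $p^{\gamma}\ll q\ll p^{1/\gamma'}$, both vanish, so $\P(0\text{ empty forever})\to 1$. The principal technical step is the monotonicity coupling; once it is in place, the three phases follow from Theorem~\ref{thm:1d long-range} and its symmetric analogue by a routine extremal argument.
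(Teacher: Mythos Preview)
Your proposal is correct and follows the same route as the paper: verify $\gamma\gamma'>1$ by the identical expansion, then derive the three phases from Theorem~\ref{thm:1d long-range} and its symmetric analogue obtained by swapping colors and exchanging $\rho,\tau$. The paper's own proof is terser and leaves implicit the passage from ``$\limsup\to 0$ as $a\to\infty$ along $q=ap^\gamma$'' to ``$\P(0\text{ blue})\to 0$ whenever $q\gg p^\gamma$''; your monotone coupling in $(p,q)$ is a clean, self-contained way to justify that step, and the case analysis you sketch for the induction does go through.
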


Our second result addresses competition between a two-dimensional 
red growth with range $\rho$ and a one-dimensional blue one. To 
strengthen the second part of the theorem, we show that the blue type 
still wins in the appropriate regime even if it 
only spreads leftward,
with the smallest possible range $1$. See Figure~\ref{fig:2vs1}
for an illustration. 

\begin{figure}
    \centering
    \includegraphics[width=0.45\linewidth]{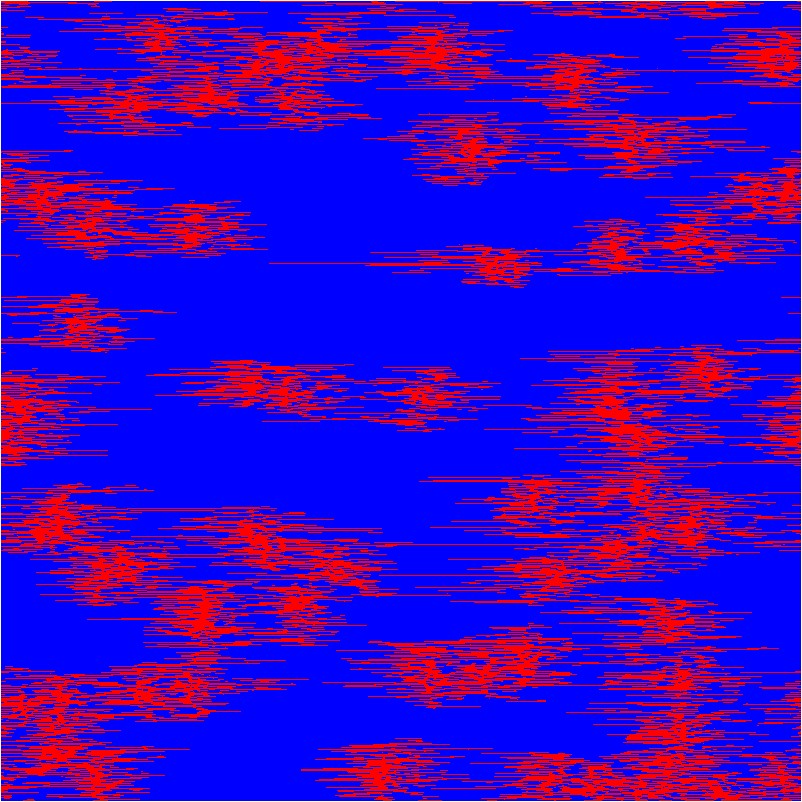}\hskip0.3cm
    \includegraphics[width=0.45\linewidth]{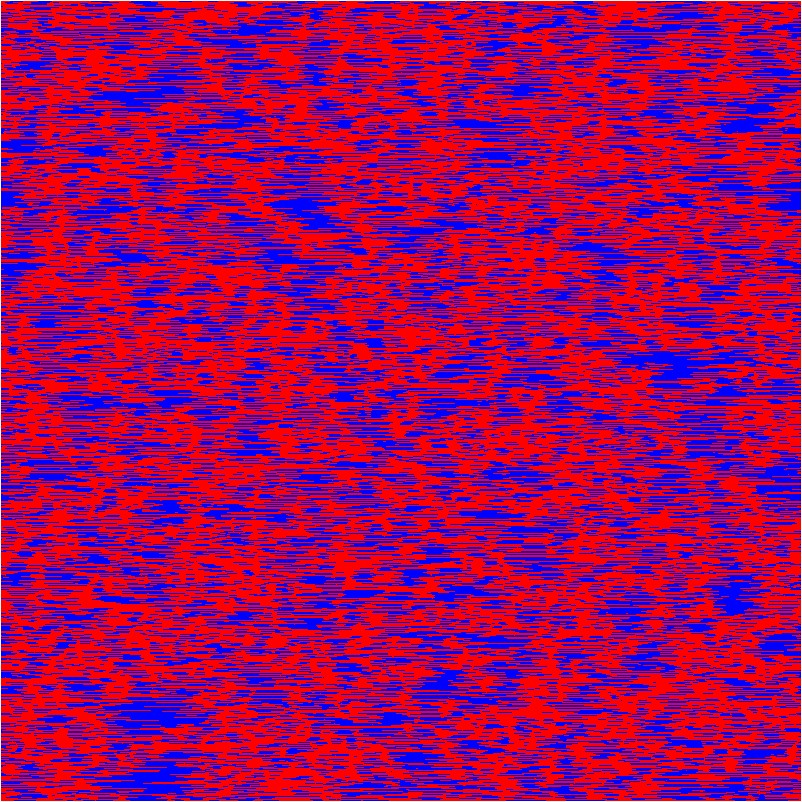}       
        \caption{Illustration of Theorem~\ref{thm:1vrho} when $\rho=2$ and $\cB=\{e_1\}$: final configurations on $800\times 800$ torus when $p=0.04$, $q=0.0001$ (left) and $q=0.001$ (right).}
    \label{fig:2vs1}
\end{figure}
 
\begin{theorem}\label{thm:1vrho} (1) Assume that 
$\cR=\{x\in\bZ^2: ||x||_1\le \rho\}$ for some $\rho\in \bZ_{\ge1}$. 
If $e_1\in \cB$ and $p,q$ are not both zero, then a.s.~every site in $\bZ^2$ 
is eventually nonempty. 

In (2) and (3) below, assume that $q=a p^{1+\rho/(\rho+1)}$.

\noindent (2) Suppose first that $\cB = ([-\tau,\tau]\cap \bZ)\times \{0\}$, with $\tau \in \bZ_{\ge1}$. Then for every $a>0$,
$$
\limsup_{p\to 0} \P(0 \text{ is eventually blue}) < 1
$$
and tends to $0$ as $a\to\infty$. 

\noindent (3) Suppose next that $\cB=\{e_1\}$. Then for every $a>0$,
$$
\liminf_{p\to 0} \P(0 \text{ is eventually blue})>0
$$
and tends to $1$ as $a \to 0$. 
\end{theorem}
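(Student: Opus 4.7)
For Part~(1) I plan a direct row-by-row argument. Fix a site $(x,y)$. Since $p$ and $q$ are not both zero, the seeds on row~$y$ form a stationary Bernoulli process of positive density, so the nearest seed to the left and to the right of $(x,y)$ exist a.s.\ at finite distance. Because $e_1\in\cB$, blue always propagates (at least) horizontally along the row, and since $\cR$ is the full $L^1$-ball, red also propagates horizontally; thus every finite gap on row~$y$ bordered by two seeds is filled in finite time by a short case analysis on the colors at the border (BB, BR, RB, RR), each of which reduces to a one-dimensional race between blue's ray and red's outward-expanding interval on the row. Off-row seeds can only accelerate this filling, so $(x,y)$ is a.s.\ eventually nonempty, and Part~(1) follows by a union bound over the countably many sites.

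The heart of the proof of Parts~(2) and~(3) at the critical scaling $q=ap^{(2\rho+1)/(\rho+1)}$ is a single quantitative estimate, which I will call the \emph{descent estimate}: for a red seed placed at $(x_0,y_0)$ with $y_0>0$ (and all other seeds sampled as in the model), the probability that the red cluster ever reaches the row $\{y=0\}$ is bounded above by $\exp(-c_\rho (p\tau_{\mathrm{eff}})^\rho y_0^{\rho+1}/\rho^{\rho+1})$, where $\tau_{\mathrm{eff}}$ is the effective blue width ($\tau_{\mathrm{eff}}=1$ in Part~(3), $\tau_{\mathrm{eff}}=\tau$ in Part~(2)). The heuristic is a bottleneck at each descent step: to extend red's cluster from the row at $L^1$-distance $(n-1)\rho$ below $y_0$ to the row at $n\rho$, red needs at least one of roughly $\rho$ candidate target-row columns (those within $L^1$-distance $\rho$ of the deepest red site) to be empty at time $nr$. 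Each candidate is preempted by blue with probability $\sim p\tau_{\mathrm{eff}}\, n$, since a blue seed within horizontal distance $\tau_{\mathrm{eff}} n$ of the candidate infects it by time~$n$. The $\rho$ candidates depend on blue seeds in $\rho$ distinct columns and are essentially independent, so the probability that all are blocked at step~$n$ is $\sim (p\tau_{\mathrm{eff}} n)^\rho$, and summing in $n\in\{1,\ldots,y_0/\rho\}$ yields the stated exponent, together with the characteristic effective reach $y_{\mathrm{eff}}\sim\rho(p\tau_{\mathrm{eff}})^{-\rho/(\rho+1)}$.

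Given the descent estimate, Parts~(2) and~(3) become first-moment computations in opposite directions. For Part~(3), condition on the nearest blue seed on row~$0$ to the right of the origin (with $\cB=\{e_1\}$, blue propagates leftward) lying at distance $K\le C/p$, an event of probability $1-e^{-C}$. The leftward ray from this seed reaches the origin unless some off-row red successfully descends and lands on an intermediate site $(m,0)$ within time $K-m$; the expected number of such blocking reds, after integrating the descent estimate against the geometric kill region (effective vertical range $y_{\mathrm{eff}}\sim\rho p^{-\rho/(\rho+1)}$, horizontal range $\sim K\sim 1/p$), is of order $qKy_{\mathrm{eff}}=\Theta(a)$, independent of~$p$. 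A Poisson approximation for these sparse reds then gives $\P(\text{no blocker})\to e^{-\Theta(a)}$ as $p\to 0$, which is positive for every $a>0$ (giving $\liminf>0$) and tends to $1$ as $a\to 0$. For Part~(2), the mirror-image computation shows that the expected number of reds in the \emph{winning region} for the origin (scaled to two-sided blue of range $\tau$) is $\Theta(a/\tau^{(2\rho+1)/(\rho+1)})$, and a Poisson lower bound gives $\P(\text{some red seed beats blue to the origin})\ge 1-e^{-\Theta(a/\tau^{(2\rho+1)/(\rho+1)})}$, which is positive for every $a>0$ (so $\limsup\P(0\text{ blue})<1$) and tends to $1$ as $a\to\infty$.

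The principal technical obstacle is the rigorous derivation of the descent estimate. Because red's cluster is genuinely two-dimensional, a naive union bound over straight descent paths understates red's ability to detour around single blockages, and so the $(p\tau_{\mathrm{eff}} n)^\rho$ bottleneck must be established as an \emph{upper} bound on the true descent probability, not merely a heuristic. The plan is to show inductively that with overwhelming probability the deepest red row stays within a narrow lateral window of column~$x_0$, so that each descent step involves only $\Theta(\rho)$ candidate sites, and to exploit the fact that the blue-seed randomness used in successive bottlenecks lies in disjoint rows and is thus genuinely independent. Controlling the rare events in which red detours significantly further laterally (either after several bottlenecks or along an unusually wide path) and verifying that the resulting tail bound still produces the announced exponent is where the bulk of the analytical effort will go.
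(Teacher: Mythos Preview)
Your Part~(1) argument is fine though more elaborate than needed; the paper simply observes that the nearest eventually-nonempty site to the right of the origin on row~$0$ forces its left neighbor to become nonempty as well, so the infimum of such positions must be~$0$.

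For Part~(2) your sketch is roughly in the right spirit, but note that what you need there is a \emph{lower} bound on the probability that a specific red seed descends unimpeded, not the upper bound furnished by your descent estimate. The paper handles this directly: it exhibits an explicit event (a red seed in a box of dimensions $\Theta(1/p)\times\Theta(p^{-\rho/(\rho+1)})$ with no set of $\rho$ consecutive rows nearby each containing a blue seed) and computes its probability. A second-moment or Poisson argument on the ``winning region'' would also need correlation control between candidate reds, which you do not discuss.

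The serious gap is in Part~(3). Your whole plan rests on the descent estimate as a rigorous \emph{upper} bound, and your proposed proof---showing that the deepest red row ``stays within a narrow lateral window of column $x_0$''---is precisely where the difficulty lies and, as stated, does not go through. With $\cB=\{e_1\}$, blue only grows leftward, so red is never obstructed to its right on a row until it meets the leftward-growing ray from the next blue seed; after each blockage red may detour rightward by an amount comparable to the local blue gap, and these detours accumulate. There is no a~priori reason the lateral drift stays $o(y_0)$, and once red's frontier on a row has width $w$, the event ``all candidate descent columns are simultaneously blue'' is governed by the rightmost column, which itself evolves through the coupled red--blue dynamics. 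Your inductive scheme does not control this feedback. A second issue you do not address is collaboration among red seeds: clusters from distinct seeds can merge and present a wider front than any single seed would, so summing a single-seed descent probability over seeds is not obviously an upper bound on the probability that \emph{some} red reaches row~$0$.

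The paper takes an entirely different route for Part~(3). Rather than bounding a descent probability, it constructs an explicit multiscale \emph{blocking configuration} of blue intervals in geometrically growing layers: each layer consists of $m$ overlapping horizontal intervals whose lengths grow by a fixed factor $\lambda>1$ from layer to layer, so that the $\ell$th layer is wide enough to cover the lateral spread red could have accumulated by the time it reaches that height. A continuum comparison shows that if the activation regions for every layer contain blue seeds, red is delayed by time $\Theta(\lambda^\ell/\sqrt{p})$ at layer $\ell$. An additivity lemma (reducing to static blue once each layer is crossed) handles multiple red cones simultaneously. Finally, a recursive tail bound of the form $P_z\le\sum_\ell e^{-B\delta\lambda^{\ell/2}}P_{z-\lceil D\lambda^\ell\rceil}$ shows that gaps between successful blocking centers have stretched-exponential tails, which yields a red-free region of area $O(p^{-3/2})$ protecting the $x$-axis. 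The multiscale geometry is doing exactly the work your narrow-window hypothesis was supposed to do, but in a way that is robust to lateral spread by construction.
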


We give the very simple proof of the first statement 
right away. 

\begin{proof}[Proof of (1)]
Let 
$A=\inf\{a\ge 0: (a,0)\text{ is nonempty in the final configuration}\}$. 
As at least one of $p$ and $q$ is nonzero, $P(A<\infty)=1$. If $a>0$ is finite, 
then on the event $\{A=a\}$ there is 
a finite time $t$ at which $(a,0)$ is colored, 
which causes $(a-1,0)$ to be 
colored by time $t+(1\vee r)$. 
This implies that 
$\{A<\infty\}=\{A=0\}$.
\end{proof}

The proof of part (2) of Theorem~\ref{thm:1vrho} is relatively straightforward, and the key lemma is proved in Section~\ref{sec:red wins}. The most substantial effort is devoted to proving 
part (3) of
Theorem~\ref{thm:1vrho}. This involves a multiscale argument, whose 
essential ingredients are already necessary for the nearest 
neighbor case $\rho=1$. To make the presentation more readable, 
we therefore assume $\rho=1$ until Section~\ref{sec:long range}. The bulk of the argument is in Section~\ref{sec:blue wins}, and in Section~\ref{sec:long range}  we explain the modifications necessary for the general case. We also complete the proofs of Theorem~\ref{thm:1d long-range} and Corollary~\ref{thm:1d three phases} in Section~\ref{sec:long range}. In Section~\ref{sec:open} we conclude with a discussion of open problems and extensions to multicolor dynamics.

\section{Red wins}\label{sec:red wins}
In this section, we prove the following lemma, which implies part (2) of Theorem~\ref{thm:1vrho}. 
\begin{lemma}\label{lem:red wins}
Assume that $\cR=\{\pm e_1, \pm e_2\}$, 
$r>0$ is arbitrary, and $\cB = ([-\tau,\tau]\cap \bZ)\times \{0\}$, with $\tau \in \bZ_{\ge1}$.
    If $q = ap^{3/2}$, then
    $$
 \liminf_{p\to 0} \P(0 \text{ is eventually red}) >0
    $$
    for every $a>0$ and tends to $1$ as $a\to\infty$.
\end{lemma}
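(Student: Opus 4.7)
The strategy exploits the mismatch between red's two-dimensional growth and blue's one-dimensional (row-wise) growth. In the regime $q = ap^{3/2}$, the nearest red seed to the origin lies at $\ell_1$-distance $\Theta(q^{-1/2}) = \Theta(p^{-3/4})$, so red would reach the origin in time $\Theta(r p^{-3/4})$ absent interference. In contrast, blue can color the origin only via a seed in row~$0$ (since blue grows only horizontally), and the nearest such seed lies at expected column-distance $\Theta(1/p) \gg p^{-3/4}$. Hence red should win the direct race, provided blue in other rows cannot block red's approach.

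I would fix a large constant $A = A(a)$ and work at the scale $L = A p^{-3/4}$. Let $B_L = \{x \in \bZ^2 : |x|_1 \le L\}$, and let $E_1$ be the event that $B_L$ contains at least one red seed. A standard estimate gives $\P(E_1) \ge 1 - (1-q)^{|B_L|} \ge 1 - \exp(-c\, a\, A^2)$ for some constant $c>0$, which tends to $1$ as $A \to \infty$; for fixed small $a$, $A$ can simply be chosen large to make this probability close to $1$. On $E_1$, pick a red seed $s^* = (x_0, y_0) \in B_L$; absent blue blocking, red from $s^*$ reaches the origin by time $r|s^*|_1 \le rL$. I would then introduce a safety event $E_2$ with two components: \textbf{(a)} no blue seed in row~$0$ lies within column-distance $2\tau r L$ of the origin, so blue cannot color the origin directly by time $rL$; and \textbf{(b)} the blue seeds in rows $k \ne 0$ do not collectively block every red path from $s^*$ to the origin. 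Component (a) fails with probability at most $4\tau r L p = O(\tau r A p^{1/4})$, which tends to $0$.

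The main obstacle is establishing (b). A single blue seed at $(s_x, k)$ creates, by time $t$, a horizontal segment of length at most $2\tau t$, so red from $s^*$ can cross row $k$ at any column $c$ satisfying $|c - s_x| > \tau r\bigl(|c - x_0| + |k - y_0|\bigr)$. When $\tau r \le 1$, such $c$ exists for $|c|$ sufficiently large of the appropriate sign, so no single blue seed in a given row can block red. The subtlety is the case $\tau r > 1$, in which a blue seed within column-distance $\tau r|k - y_0|$ of $x_0$ genuinely obstructs red's straight-line path through row $k$. The delicate geometric argument then shows that even if several rows host such blocking blue seeds, the short blue segments leave gaps for red's $\ell_1$-ball to circumvent, and for large $a$ the $\Theta(a A^2)$ red seeds in $B_L$ provide many near-independent attempts at reaching the origin, at least one of which succeeds. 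Combining these estimates yields $\P(E_1 \cap E_2)$ bounded below for each $a > 0$, and tending to $1$ as $a \to \infty$, which is exactly the claim of the lemma.
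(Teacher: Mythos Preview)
Your proposal has a genuine gap at step (b), precisely where you defer to a ``delicate geometric argument.'' When $\tau r > 1$, which the lemma must cover, a red seed at $(x_0, y_0)$ is \emph{permanently} prevented from entering row $k$ (for $0 \le k < y_0$) by any single blue seed $(b,k)$ with $|b - x_0| \le \tau r (y_0 - k)$: red's arrival time at $(x,k)$ is at least $r(|x - x_0| + y_0 - k)$, blue's is $|x - b|/\tau$, and one checks that $\sup_x\big(|x-b| - \tau r|x-x_0|\big) = |b-x_0|$, so the inequality $|x-b| > \tau r(|x-x_0| + y_0 - k)$ has no solution in $x$ under the stated condition on $b$. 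The resulting blocking triangle below the red seed has area $\tau r\, y_0^2$, and for a typical seed in $B_L$ with $|y_0|$ of order $A p^{-3/4}$ the expected number of blue seeds in this triangle is of order $\tau r A^2 p^{-1/2} \to \infty$. The ``many near-independent attempts'' heuristic also fails: a single blue seed near the $x$-axis blocks every red seed in the cone above it simultaneously. Even the lowest red seed in $B_L$ typically sits at height $\Theta(p^{-3/4})$, and the expected number of red seeds in $B_L$ at height at most $p^{-1/2}$ is $O(aA\, p^{1/4}) \to 0$, so with high probability your ball contains no viable seed at all.

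The paper's proof avoids this by replacing the isotropic ball with the anisotropic rectangle $[0, \epsilon/(2p)] \times [0, \epsilon/\sqrt{p}]$. This has the same area $\Theta(p^{-3/2})$, hence $\Theta(\epsilon^2 a)$ expected red seeds, but every red seed now sits at height at most $\epsilon p^{-1/2}$. The strip that must be blue-free for the vertical descent then has area $O(\epsilon^2/p)$ and contains $O(\epsilon^2)$ expected blue seeds, which is made small by choosing $\epsilon$ small. Once red reaches row $0$ at some column $x \le \epsilon/(2p)$, it travels along the $x$-axis to the origin; the only remaining obstruction is a blue seed in row $0$ within distance $O(\epsilon/p)$, which is essentially your component (a). The idea you are missing is that the relevant red seed is not the $\ell_1$-nearest one to the origin, but the nearest one lying in a thin horizontal strip of height $\sim p^{-1/2}$, and that strip must extend to horizontal distance $\sim 1/p$ before it is likely to contain a red seed.
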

\begin{proof}
    To gain independence between red and blue vertices, we construct the initial configuration in two stages. First, mark each site independently as red with probability $q$. Next, mark each site independently potentially blue with probability $p' = \frac{p}{1-q}$. 
    To realize the initial state (\ref{eq:initial state}), label each site marked red as red, label each site that is not red but potentially blue as blue, and label each remaining site empty.

\begin{figure}%[ht!]
\begin{center}
\begin{tikzpicture}
%\draw[step=0.25,faint,very thin] (0,0) grid (2,4);
%\draw[step=1cm,black,thick] (0,0) grid (8,8);

%\fill[lgrey] (0.75,0) rectangle (1,1);

\draw[color=black, thick] (-5,0)--(5,0);
\draw[color=black, thick] (0,0)--(0,0.2);
\fill[lgrey] (0,0) rectangle (5,1);
%\fill [red] (3.6, 0.6) circle (2pt);
\draw [fill=lgrey,ultra thick] (3.6-1,0) rectangle (3.6+1,1);
\node [left] at (0,0.5) {$A+1$};
\draw [line width=0.5, <-] (-0.55, 0) -- (-0.55,0.25);
\draw [line width=0.5, ->] (-0.55, 0.75) -- (-0.55,1);
\node [above] at (3.6,1) {$2A+1$};
\draw [line width=0.5, <-] (3.6-1, 1.3) -- (3.6-0.7,1.3);
\draw [line width=0.5, ->] (3.6+0.7, 1.3) -- (3.6+1,1.3);

\draw[color=black, ultra thick] (-6,0)--(6,0);
\draw[color=black, thick] (0,0)--(0,0.15);
\draw[color=black, thick] (5,0)--(5,0.15);
\draw[color=black, thick] (-5,0)--(-5,0.15);
\draw[color=black, thick] (6,0)--(6,0.15);
\draw[color=black, thick] (-6,0)--(-6,0.15);

\fill [red] (3.6, 0.6) circle (2pt);

%\draw [thick, ->] (1,-1) -- (6,-1);
%\draw [thick, <-] (-6,-1) -- (-1,-1);
%\node at (0,-1) {$A+B$};
\node [below] at (0,0) {$(0,0)$};
\node [below] at (5,0) {$(B,0)$};
\node [below] at (6.7,0) {$(B+A,0)$};
\node [below] at (-5,0) {$(-B,0)$};
\node [below] at (-7,0) {$(-(B+A),0)$};
  
\end{tikzpicture}
\end{center}
\caption{Assuming $r=\tau=1$, the origin becomes red if there are
integers $A$ and $B$ so that, initially, there is a red site in the rectangle $[0,B]\times [0,A]$ (grey), no blue site in the outlined $(2A+1)\times (A+1)$ rectangle centered horizontally around that red site, and also no blue site 
in the interval on  the $x$ axis from $(-(B+A),0)$ to $(B+A,0)$.} 
\label{fig:red wins}
\end{figure}
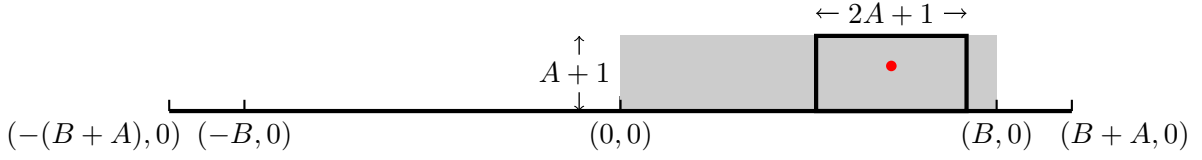

    We define the following two events for a constant $\epsilon>0$ and small enough $p$:
    \begin{equation}\label{eq:GH}
    \begin{aligned}
        G &= \{\text{there is no potentially blue site in } [-\epsilon \tau r/p,\epsilon \tau r/p]\times\{0\}\}, \quad \text{and} \\
        H &= \{\text{there is a red site $(x,y)\in [0,\epsilon/2p]\times [0,\epsilon/\sqrt{p}]$ and}\\
        &\qquad \text{ no potentially blue site in $[x-2\epsilon \tau r/\sqrt{p}, x+2\epsilon \tau r/\sqrt{p}]\times [0,\epsilon/\sqrt{p}]$}\}.
    \end{aligned}
    \end{equation}
    Observe that on the event $G\cap H$, the origin becomes red because the red site at $(x,y)$ causes $(x,0)$ to become red by time $\epsilon \tau r / \sqrt{p}$, before blue can interfere, and this red growth propagates to the origin in additional time at most $\epsilon \tau r / 2p$, before the closest blue could reach the origin. See 
    Figure~\ref{fig:red wins}. 

    Fix $\delta>0$. Choose $\epsilon$ small enough that $3\epsilon \tau r <\delta$ and $6 \epsilon^2 \tau r <\delta$, so that for small enough $p$ (and $q$), we have $\P(G^c)<\delta$ and
    \begin{equation*}
    \begin{aligned}
    &\P(\text{there exists a potentially blue site in $[-2\epsilon \tau r /\sqrt{p},2\epsilon \tau r /\sqrt{p}]\times [0,\epsilon/\sqrt{p}]$}) \\&\le \frac{5\epsilon^2 \tau r}{p} p' \le 6\epsilon^2 \tau r < \delta.
    \end{aligned}
    \end{equation*}
    Then,
    $$
    \P(\text{there is no red site in } [0,\epsilon/2p]\times [0,\epsilon/\sqrt{p}]) \le (1-q)^{\epsilon^2 / 3p^{3/2}} \le \exp(- \epsilon^2 a / 3).
    $$
     It follows by independence between red sites and potentially blue sites that
    $$
    \P(H)\ge (1-\delta)(1-\exp(-\epsilon^2 a/3)).
    $$
    Since $G$ and $H$ are positively correlated, for every $a>0$ and all sufficiently small $p$ we have
    $$
    \P(0 \text{ is eventually red}) \ge \P(G\cap H)\ge (1-\delta)^2(1-\exp(-\epsilon^2 a / 3) )>0.
    $$
    This proves the first claim of the lemma. The second claim follows by taking $a$ sufficiently large such that $\exp(-\epsilon^2 a/3)<\delta$.
\end{proof}

\section{Blue wins}\label{sec:blue wins}
We prove in this section the lower bound on the probability that the origin is eventually blue when $\cB = \{e_1\}$ and $\cR = \{\pm e_1,\pm e_2\}$. Observe that we have monotonicity in the red configurations, meaning that adding red sites at any time results in more sites being colored red and fewer sites colored blue at later times. Therefore, without loss of generality, we can assume that $r\le 1$. %\note{state this as a lemma}

For blue to win, it is sufficient that the following conditions hold:
\begin{enumerate}
    \item there is an initially blue vertex in $[0, C/p]\times \{0\}$; and
    \item no vertex in $[-C/p, C/p]\times \{0\}$ is colored red by time $C/p$.
\end{enumerate}
Condition 1 is easy to guarantee by choosing $C$ large. Condition 2 is much more subtle. We will construct a blocking configuration of initially blue sites, whose eventual growth will intercept any red sites below its center, thus preventing red from reaching the $x$-axis from below. By symmetry, a similar structure will also stop red from above.

\subsection{A continuous comparison process}
We describe here a process in continuous time and space, for which we show that blue can slow the vertical progression of red. In this process, we let $\red_t\subset \bR^2$ denote the set of red points at time $t\in [0,\infty)$, and let $\blue\subset\bR^2$ be the set of blue points, which will not change in time in the comparison process. Red points will grow vertically at infinite speed, and horizontally at speed $1/\alpha$ with $\alpha\in (0, \infty)$, but cannot pass through blue sites, and must grow around them. More precisely, for $z,z'\in \bR^2$, let $\Gamma(z,z')$ denote the set of paths in $\bR^2$ that start at $z$, end at $z'$, and move only horizontally or vertically. That is, each path in $\Gamma(z,z')$ consists of line segments connecting consecutive points in $z=z_0, z_1, \ldots, z_n = z'$ for some $n$ and such that $z_{i} - z_{i-1}$ has exactly one coordinate equal to $0$. The length of a path $\gamma = (z_0, \ldots, z_n)\in \Gamma(z,z')$ is $\Len(\gamma) := \alpha \sum_{i=1}^n |x_i - x_{i-1}|$, where $z_i=(x_i,y_i)$, so $\Len(\gamma)$ is $\alpha$ times the total horizontal displacement along $\gamma$. Then, given the sets $\red_0$ and $\blue$, the set of red points at time $t$ is
$$
\red_t = \{z\in \bR^2 : \inf\{ \Len(\gamma): \gamma \in \Gamma(z,z'), z'\in \red_0, \gamma\cap \blue = \emptyset\} \le t\}.
$$

We now describe the initial sets $\red_0$ and $\blue$. Since red grows vertically at infinite speed, the vertical displacements between points in these sets will be immaterial, so we choose unit spacing. For $f,g,h\in (0,\infty)$ and integer $m\ge 1$, let
$$
\red_0 = [0,f]\times \{0\},
$$
and
$$
\blue = \bigcup_{k=1}^m \Big\{[(k-1)g-kh,kg-kh] \times \{k\}\Big\}.
%[-h,g-h]\times \{1\} \cup [g-2h,2g-2h]\times\{2\} \cup [2g-3h,3g-3h] \times \{3\} \cdots
$$
Note that $\blue$ consists of $m$ horizontal intervals of length $g$.
We want the horizontal extent of $\blue$ to exceed that of $\red_0$ by $h$ on either side, so we require
\begin{equation}\label{fgh_constraint}
m(g-h) = f+h.
\end{equation}
With this choice, observe that $\red_t$ contains points with arbitrarily large (positive) second coordinate only after time $\alpha h$. This follows because the horizontal overlap between successive blue intervals is $h$, and the overhang beyond $[0,f]$ is also $h$. So red ``escapes'' blue after this time, and the horizontal extent of $\red_{\alpha h}$ is $[-h, f+h]$.

\begin{figure}[ht!]
\begin{center}
\begin{tikzpicture}
%\draw[step=0.25,faint,very thin] (0,0) grid (2,4);
%\draw[step=1cm,black,thick] (0,0) grid (8,8);

%\fill[lgrey] (0.75,0) rectangle (1,1);

%\draw[color=black, ->] (-3,0)--(8,0);
%\draw[color=black, ->] (0,0)--(0,8);

\def\a{1}
\def\f{4}
\def\m{3}

\pgfmathsetmacro\l{{(\m+\a*\m-1)/(\m+1)}}
\pgfmathsetmacro\g{{\a*\f/2}};
\pgfmathsetmacro\h{{(\m*\g-\f)/(\m+1)}}
\pgfmathsetmacro\fp{\l*\f}
\pgfmathsetmacro\hp{\l*\h}
\pgfmathsetmacro\gp{\l*\g}

\draw[color=black, ->] (-\h-\hp-1,0)--(\fp+\hp-\h+1,0);
\draw[color=black, ->] (0,0)--(0,2*\m+2);

\foreach \k in {1,...,\m}
\draw[color=blue, line width=3] 
    ({(\k-1)*\g-\k*\h},\k)--({\k*\g-\k*\h},\k);

\foreach \k in {1,...,\m}{
\draw[color=black, line width=0.5] 
    ({(\k-1)*\g-\k*\h},\k-0.1) rectangle ({(\k-1)*\g-(\k-1)*\h},\k+0.1); 
\draw[color=black, line width=0.5] 
    ({\k*\g-\k*\h-\h},\k-0.1) rectangle ({\k*\g-\k*\h},\k+0.1); 
}
%\draw[color=black, line width=0.5] 
%    (\f,\m-0.1) rectangle (\f+\h,\m+0.1); 

\draw[color=red, line width=3] (0,0)--(\f,0);

\foreach \k in {1,...,\m}
\draw[color=blue, line width=3] 
    ({(\k-1)*\gp-\k*\hp-\h},\k+\m+1)--({\k*\gp-\k*\hp-\h},\k+\m+1);

\foreach \k in {1,...,\m}{
\draw[color=black, line width=0.5] 
    ({(\k-1)*\gp-\k*\hp-\h},\k-0.1+\m+1) rectangle ({(\k-1)*\gp-(\k-1)*\hp-\h},\k+0.1+\m+1); 
\draw[color=black, line width=0.5] 
    ({\k*\gp-\k*\hp-\h-\hp},\k-0.1+\m+1) rectangle ({\k*\gp-\k*\hp-\h},\k+0.1+\m+1); 
}
%\draw[color=black, line width=0.5] 
%    (\fp-\h,\m-0.1+\m+1) rectangle (\fp+\hp-\h,\m+0.1+\m+1); 

\draw[color=red, line width=3] (-\h,\m+1)--(\fp-\h,\m+1);

\draw[color=red, dashed, line width=3] ({\g-\h},0.1)--({\g-\h}, 2-0.1)--({\g-2*\h},2-0.1)--({\g-2*\h}, \m+1);

%\node [below] at (\f,0) {$(f,0)$};
\node [below] at (\f/2,0) {$f$};
\node [below] at (-\h/2,1) {$h$};
\node [below] at (-\hp/2-\h,\m+2) {$h'$};
\node [below] at (\fp/2-\h,\m+1) {$f'$};
\node [below] at (\f+\h-\g/2,\m) {$g$};

\draw[color=black, line width=0.5, <-]
(\f+\h-\g,\m-1+0.7)--(\f+\h-\g/2-0.2,\m-1+0.7);
\draw[color=black, line width=0.5, ->]
(\f+\h-\g/2+0.2,\m-1+0.7)--(\f+\h,\m-1+0.7);

\node [below] at (\fp+\hp-\gp/2-\h,2*\m+1) {$g'$};

\draw[color=black, line width=0.5, <-]
(\fp+\hp-\gp-\h,2*\m+0.7)--(\fp+\hp-\h-\gp/2-0.2,2*\m+0.7);
\draw[color=black, line width=0.5, ->]
(\fp+\hp-\gp/2+0.2-\h,2*\m+0.7)--(\fp+\hp-\h,2*\m+0.7);

\draw [decorate,decoration={brace}, thick] (\fp+\hp+1,\m+0.2)--(\fp+\hp+1,1-0.2);
\node at (\fp+\hp+1.5,{(\m+1)/2}) {$\blue$};

\draw [decorate,decoration={brace}, thick] (\fp+\hp+1,2*\m+1+0.2)--(\fp+\hp+1,\m+2-0.2);
\node at (\fp+\hp+1.5,{\m+1+(\m+1)/2}) {$\blue'$};

 %\node at (5,5) {$\lambda=\l$};
  
\end{tikzpicture}
\end{center}
\caption{The comparison process with $\alpha=1$ and $m=3$, and so $\lambda=5/4$. The overhangs of length $h$ and $h'$ are framed by black rectangles. The dashed line illustrates one shortest path on which red can break through the bottom
configuration $\blue$ of blue sites. 
Once that happens, 
we assume that the entire interval of length $f'$ is painted red, and 
then the red can proceed to break through the next, larger, configuration 
of blue sites, which is the appropriately translated configuration $\blue'$.
} 
\label{fig:comparison}
\end{figure}
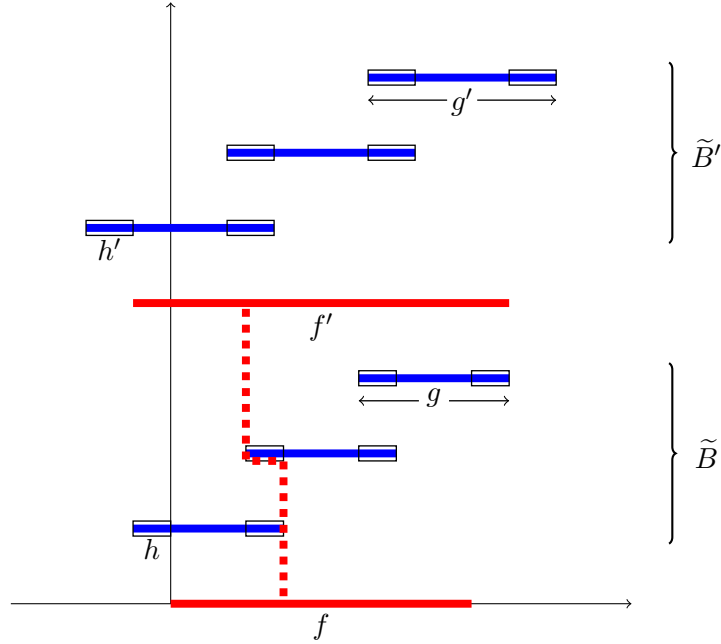

In the discrete dynamics, we will have a sequence of configurations of blue sites, and each configuration is intended to slow the vertical progress of red by some amount. To obtain a lower bound on how much blue can slow the progress of red, when red first reaches a blue configuration, we stop the growth of blue sites in that configuration, but allow blue to continue to spread in configurations above, which red has not reached up to that time. Therefore, in our continuous comparison process, we put a second configuration of blue above the first to further impede the vertical progress of red, which has had the advantage of being able to ``grow'' for an additional time $\alpha h$ before red is first able to reach these blue sites, but now must block an interval of red with horizontal extent of length $f+2h$. To quantify this situation, with $f',g',h'>0$ and $m$ as above, we shift back to the origin (to the right by $h$ and vertically so that red lies on the $x$-axis) to obtain the new initial coloring,
\begin{equation}{\label{continuous_red_prime}}
\red'_0 = [0,f'] \times \{0\}
\end{equation}
and
\begin{equation}\label{continuous_blue_prime}
\blue' = \bigcup_{k=1}^m \Big\{[(k-1)g'-kh',kg'-kh'] \times \{k\}\Big\}.
\end{equation}
We have the additional consistency equations,
\begin{equation}\label{eq:consistency}
m(g'-h') = f'+h', \qquad f' = f+2h, \qquad 
g' = g+\alpha h.
\end{equation}
In matrix form, these are
$$
\begin{pmatrix}
f'\\
g'
\end{pmatrix}
 = A\begin{pmatrix}
f\\
g
\end{pmatrix}
$$
where
$$
A = \begin{pmatrix}
1-\frac{2}{m+1} & \frac{2m}{m+1}\\
\frac{-\alpha}{m+1} & \frac{m+\alpha m+1}{m+1}
\end{pmatrix}.
$$
The matrix $A$ has eigenvalues $1$ and $\lambda=\frac{m+\alpha m -1}{m+1}$, and we want $\lambda>1$, so we choose $m>2/\alpha$. The eigenvector corresponding to  eigenvalue $\frac{m+\alpha m -1}{m+1}$ is $(1, \frac{\alpha}{2})$, so if we assume $g = \frac{\alpha}{2} f$, then
$$
h = \frac{(\frac{m\alpha}{2}-1)f }{m+1} >0
$$
and
$$
(f',g',h') = \lambda(f,g,h).
$$
See Figure~\ref{fig:comparison} for an illustrative example. 

In our discrete system, to make the existence of a blocking configuration of blue sites likely we will need larger intervals at each level wherein existence of initially blue sites guarantee a configuration resembling $\blue'$ will exist at an appropriate time in the growth process. To identify these intervals, we imagine what happens if red grows at a slower speed than $1/\alpha$, and take $\balpha>\alpha$. 
 Then $\red_t$ will first pass the configuration $\blue$ only after time $\balpha h>\alpha h$, so the blue in the next configuration has additional time $(\balpha-\alpha)h$ to grow, relative to the faster-moving red. Therefore, $\bar g' = g'+(\balpha - \alpha) h$ is the length of each blue interval in the second configuration of blue for the slower-moving red. We can translate to the right the right endpoint of each blue interval from $\blue'$ by up to $(\balpha - \alpha) h$ and still blue will have sufficient time to cover the original intervals.
 
\subsection{Discrete blocking configuration}
To make the connection with the discrete system, we construct layers in which blue sites will dominate the blue intervals in the continuous configurations at the appropriate times. In each discrete layer, we will increase the horizontal dimension by a factor $\mu^2>1$ and decrease the vertical dimension by the factor $1/\mu<1$. Since the continuous process indicates that each layer grows horizontally by a factor $\lambda$ from the previous layer, we take $\mu^2 = \lambda$, so the area of each region increases by the factor $\mu = \sqrt{\lambda}>1$. The first layer has height $m/\sqrt{p}$, which is subdivided into $m$ sublayers of height $1/\sqrt{p}$. Each subsequent layer will have height $(1/\mu)$ times the height of the previous layer, rounded up to the next integer multiple of $m$. Within the $k$th layer, each of the $m$ sublayers will contain a blue point within a rectangle with dimensions $(1/\sqrt{\lambda^k p})\times (\lambda^k C/\sqrt{p})$ centered at certain points. The construction will be such that the probability that every such rectangle (for every $k\ge 0$) contains at least one initially blue point  tends to $1$ as $C\to\infty$ for all small $p$.

% More concretely, we define a \textbf{blocking configuration centered at $0$} as follows. First, fix $\alpha<\frac{b}{r}$ and $m>2/\alpha$. For $k=0$, the first layer will be contained in the box $[0,Cm/\sqrt{p}] \times [0,m/\sqrt{p}]$. Subdividing this layer into an $m\times m$ array of $(1/\sqrt{p})\times (C/\sqrt{p})$ sub-rectangles, we call the first layer good if each of the sub-rectangles along the diagonal contains at least one initially blue site. 

% We need to use a smaller $\alpha$ to locate the rightmost possible initial locations for blue sites in layer $k$, and a larger $\alpha<\alpha'<b/r$ to locate the leftmost possible initial locations for blue sites in layer $k$. The distances between these left and right bounds will grow exponentially with $\lambda$, and the locations themselves are determined by the continuous model.

% {\color{blue} Notes on the translation to discrete dynamics.}

Construction of the \textbf{blocking configuration centered at $(0,0)$} begins by choosing $\alpha<\balpha<r$ with $\alpha< 1$, fixing $m> 2/\alpha$ and defining a sequence $(f_\ell, g_\ell, h_\ell)_{\ell\ge0}$ for the continuum dynamics with $f_0 = 1$,  $g_0 = \frac{\alpha}{2}$ and $h_0 = \frac{(\frac{m\alpha}{2}-1) }{m+1}$  so that
$$
(f_{\ell+1}, g_{\ell+1}, h_{\ell+1}) = \lambda (f_\ell, g_\ell, h_\ell).
$$
We also define $S_0 = 0$ and $S_{\ell+1} = S_\ell - h_\ell$ for $\ell\ge 1$ to account for the artificial shift to the origin in our frame of reference in the continuous dynamics at~\eqref{continuous_blue_prime}.

For $u\in\bR$, we define the \textbf{rescaled} $u$ to be  $\lceil\frac{1}{\sqrt{p}} u\rceil$, and analogously for vectors $u\in \bR^d$ and for the dimensions of rectangles.
We assume that red starts from a single point at rescaled $(0, -\frac{g_1}{\alpha})$. The shift downward by $g_1/\alpha$ is to allow time for blue to grow horizontally by at least $g_1$ at the time red reaches the $x$-axis and is contained in the rescaled interval $[0,f_0]\times\{0\}$ to initialize the continuous growth comparison. Note that for this initialization step, we assume that red propagates vertically and horizontally at speed $1/\alpha$. After this initialization step, we assume that red propagates vertically at infinite speed, as it does in the continuous comparison process.

We now translate the continuous blocking construction into a discrete one by identifying boxes through which horizontal lines of blue sites will appear by the appropriate times, assuming there are initially blue sites nearby the right edges of these lines. These boxes are divided into \textbf{layers} of $m$ boxes each. The $0$th layer of boxes have lower-left endpoints at rescaled $x$-coordinates $(k-1)g_0-kh_0$ for $k=1, \ldots, m$; they have width rescaled $g_0$; they have height rescaled $1$; and they are stacked disjointly and adjacently above the $x$-axis. Each subsequent layer is stacked disjointly above the previous layer and adjacent to it. The lower-left endpoints of the boxes in the $\ell$th layer are at rescaled $(k-1)g_\ell-kh_\ell + S_\ell$ for $k=1,\ldots, m$; they have width rescaled $g_\ell$; they have height rescaled $1/\lambda^{\ell/2}$; and they are stacked disjointly and adjacently.

\begin{figure}%[ht!]
\begin{center}
\begin{tikzpicture}
 
\def\m{3}
\def\g{5}
\def\h{2}
\def\del{0.5}
\def\a{{0.3, 0.8, 0.4}}
\def\b{{0.7, 0.2, 0.6}}

\foreach \i in {1,...,\m}{
 \fill[lgrey] ({\i*\g-\i*\h},{(\i-1)*\del}) rectangle 
    ({\i*\g-\i*\h+1.2},\i*\del);
    \draw[color=blue, line width=1] 
    ({(\i-1)*\g-\i*\h},{(\i-1)*\del}) rectangle ({\i*\g-\i*\h},{\i*\del});   
     
    \fill [blue] ({(\i*\g-\i*\h)+\a[\i-1]*(-(\i*\g-\i*\h)+\i*\g-\i*\h+1.2)}, 
    {(\i-1)*\del+\b[\i-1]*(-(\i-1)*\del+\i*\del}) circle (2pt);
     
  }
\end{tikzpicture}
\end{center}
\caption{An example of a layer with $m=3$ in which all  boxes (outlined in blue) are successful: there is a blue site in each of the 
$m$ shaded activation regions.} 
\label{fig:success}
\end{figure}
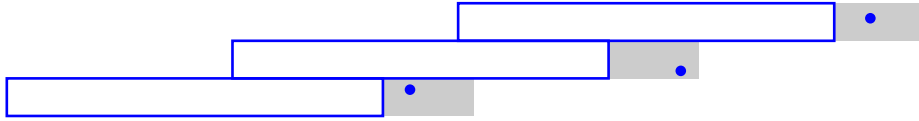

We call the $k$th box in the $\ell$th layer \textbf{successful} if there is at least one blue site initially in the box immediately to its right with the same rescaled height $1/\lambda^{\ell/2}$ and rescaled width $(\balpha-\alpha)h_{\ell-1}$. We call these boxes the \textbf{activation regions} corresponding to the $k$th box in the $\ell$th layer. We call the $k$th box in the $0$th layer successful if there is a blue site initially in the activation region immediately to its right with rescaled height $1$ and rescaled width $\frac12(\balpha-\alpha)$. We say the entire blocking configuration centered at $(0,0)$ is \textbf{successful} if every box in every layer is successful. See Figure~\ref{fig:success}.

Let
\begin{equation*}
\begin{aligned}
    \cC(x_0,y_0) &:= (x_0, y_0) + \{(x,y) : y\le - |x|\}  \\
    \cC'(x_0,y_0) &:=  (x_0, y_0) + \{(x,y) : y\ge  |x|\}
\end{aligned}
\end{equation*}
denote the downward and upward affine cones with peak at $(x_0,y_0)$.
\begin{lemma}\label{successful_blocking_lemma}
    If the blocking configuration centered at $(0,0)$ is successful and initially red sites are confined to the rescaled affine cone $\cC(0, -g_1/\alpha)$, then every box in the construction contains a horizontal line spanning the box that is colored blue before the first time that red enters the smallest horizontal strip containing the box.  Moreover, the static initial configuration of blue, which consists only of the line segments crossing each box in the blocking configuration and does not grow, also prevents red from reaching the smallest horizontal strip containing the boxes in layer $\ell$ before time $\frac{1}{\sqrt{p}}\left[g_{\ell} + (\balpha-\alpha) h_{\ell-1} \right]$.
\end{lemma}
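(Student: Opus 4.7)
I will prove both claims simultaneously by induction on the layer index $\ell \ge 0$. The guiding idea is that the static blue lines already produced in the lower layers, via the continuous comparison of the previous subsection, force the discrete red process to make large enough horizontal detours that blue growing leftward at unit speed from each layer-$\ell$ activation region has time to span its corresponding layer-$\ell$ box before red arrives. The width $(\balpha - \alpha)h_{\ell-1}/\sqrt{p}$ of the activation region is precisely the slack that converts the continuous comparison (red at speed $1/\alpha$ with infinite vertical speed) into a valid bound for the discrete red (speed $1/r \le 1/\balpha$ in the $L^1$ sense).

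\textbf{Base case $\ell = 0$.} By hypothesis every initial red site is at $L^1$ distance at least $g_1/(\alpha\sqrt{p})$ below the $x$-axis, so since red moves at speed at most $1/r \le 1/\balpha < 1/\alpha$, it reaches the layer-$0$ strip no earlier than rescaled time $g_1/\sqrt{p}$. Meanwhile, the initially blue site in each layer-$0$ activation region (of rescaled width $\tfrac12(\balpha-\alpha)$) spreads leftward at unit speed and spans the adjacent layer-$0$ box of rescaled width $g_0$ in rescaled time at most $g_0 + \tfrac12(\balpha-\alpha)$, which is strictly less than $g_1$ once $\lambda > 1$ and $\balpha$ is chosen sufficiently close to $\alpha$. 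This yields the spanning blue line in each layer-$0$ box before red arrives. The delay bound for the static layer-$0$ configuration then follows by a direct $L^1$ path analysis: any lattice path from the red starting point to the layer-$1$ strip must either cross the blue line or detour around it by horizontal distance at least $h_0/\sqrt{p}$.

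\textbf{Inductive step.} Suppose the statement holds for all $k < \ell$. The hypothesis supplies a spanning blue line in every box of layers $0, \ldots, \ell - 1$, and replacing the dynamically grown blue by these static lines only weakens blue's obstruction to red. The resulting static configuration is, up to rescaling by $1/\sqrt{p}$, a stack of $\ell$ copies of the continuum $\blue$ produced by iterating the update $(f, g) \mapsto (f', g')$. Running the continuous comparison with red at speed $1/\balpha \ge 1/r$, each lower layer imposes additional horizontal detour of $h_k/\sqrt{p}$, which together with the recurrence $g_\ell = g_{\ell-1} + \alpha h_{\ell-1}$ and the $\balpha$-to-$\alpha$ correction yields the rescaled lower bound $g_\ell + (\balpha - \alpha)h_{\ell-1}$ on the time red can first reach the layer-$\ell$ strip. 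Within this window, the initially blue site in each layer-$\ell$ activation region (rescaled width $(\balpha - \alpha)h_{\ell-1}$) spreads leftward at unit speed and covers the full rescaled width $g_\ell + (\balpha - \alpha)h_{\ell-1}$ exactly by the time red arrives, producing the spanning blue line in each layer-$\ell$ box and closing the induction.

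\textbf{Main obstacle.} The principal subtlety is keeping the continuous-to-discrete comparison uniformly tight across all layers, since the continuous process assigns red infinite vertical speed whereas the discrete red also pays $L^1$ time for vertical motion. Because the layer heights decrease geometrically like $1/\sqrt{\lambda^\ell p}$ while the horizontal widths grow like $\sqrt{\lambda^\ell}/\sqrt{p}$, the vertical cost within any single layer is negligible compared with the horizontal detour, and the $\balpha - \alpha$ slack absorbs the resulting discrepancy provided it is chosen small enough at the outset. Verifying that these rounding errors do not accumulate across the infinite hierarchy of layers is the main bookkeeping step a full proof must execute.
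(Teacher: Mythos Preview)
Your approach is essentially the same as the paper's: induction on the layer index, invoking the continuous comparison with red moving at horizontal speed $1/\balpha$ to bound the discrete dynamics from below. There are, however, two places where your argument slips relative to the paper's.

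\textbf{Base case.} You bound the time for red to reach the $x$-axis by $g_1/\sqrt{p}$, obtained by weakening the red speed all the way to $1/\alpha$. This forces you to require $\balpha/2 < g_1 = \lambda\alpha/2$, i.e.\ $\balpha < \lambda\alpha$, which is \emph{not} part of the blocking-configuration setup (only $\alpha<\balpha<r$ is assumed). The paper avoids this by using the actual red speed $1/r$: the apex of the cone is at vertical distance $g_1/(\alpha\sqrt{p})$, so red takes time at least $rg_1/(\alpha\sqrt{p}) > r/(2\sqrt{p}) > \balpha/(2\sqrt{p})$, using only $g_1>\alpha/2$ and $r>\balpha$. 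Your extra hypothesis is easily removed once you keep the factor $r/\alpha$.

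\textbf{Inductive step.} The paper carries two pieces of data through the induction: (i) the time lower bound $g_\ell/\sqrt{p}$ for red to enter the layer-$\ell$ strip, and (ii) the containment of red's first entry points in the rescaled interval $[S_\ell, S_\ell+f_\ell]$. You state (i) but not (ii). The horizontal containment is essential: the blue segments in layer $\ell$ are positioned precisely so that the continuum overlap $h_\ell$ obstructs red entering \emph{within} $[S_\ell, S_\ell+f_\ell]$; red entering outside that interval would bypass the layer entirely. The one-step update $(f_\ell,g_\ell,h_\ell)\mapsto(f_{\ell+1},g_{\ell+1},h_{\ell+1})$ in the continuous comparison simultaneously updates both the time and the horizontal extent, so the induction must verify both. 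Once you add (ii) to your hypothesis and note that the continuous escape at speed $1/\balpha$ lands red in $[S_\ell - h_\ell, S_\ell + f_\ell + h_\ell] = [S_{\ell+1}, S_{\ell+1}+f_{\ell+1}]$, the step closes exactly as the paper does, via $g_\ell + \balpha h_\ell = g_{\ell+1} + (\balpha-\alpha)h_\ell$.
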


\begin{proof}
    By monotonicity, we may assume that every point in the affine cone is initially red. For the initialization step, observe that it takes time $\frac{1}{\sqrt{p}}\left[g_0 + \frac12(\balpha - \alpha)\right] = \frac{\balpha}{2\sqrt{p}}$ for blue to grow across the boxes in layer $0$. Meanwhile, it takes time 
    $$
    \frac{g_1 r}{\alpha \sqrt{p}}>\frac{r}{2\sqrt{p}}>\frac{\balpha}{2\sqrt{p}}
    $$
    for red to first reach the $x$-axis, which it first does at the origin. 
    This verifies the base case for our induction argument. 
    At the time at which red first reaches the $x$-axis, we can assume the dynamics proceeds from the single red point at the origin and the blue configuration at this time, which we can compare to the continuous dynamics.
    
    For the induction step, we assume that the time it takes for red to first reach layer $\ell$ from this initial configuration is at least $\frac{g_\ell}{\sqrt{p}}$, and the points in layer $\ell$ that are first colored red are contained in the rescaled interval $[S_\ell, S_\ell+f_\ell]$. To compute the time for red to reach the $(\ell+1)$st layer in the discrete dynamics, we compare with the continuous dynamics where red grows at speed $1/\balpha$ within layer $\ell$. In these dynamics, the time for red to pass the $\ell$th layer is $\balpha h_\ell$, so the time for red to reach the $(\ell+1)$st layer in the discrete dynamics is at least
    $$
    \frac{1}{\sqrt{p}}[g_\ell + \balpha h_{\ell}] = \frac{1}{\sqrt{p}}\left[g_{\ell+1} + (\balpha-\alpha) h_\ell \right], 
    $$
    which is an upper bound on the time for blue to horizontally cross the boxes in layer $\ell+1$. This allows comparison with the continuous dynamics to proceed in layer $\ell+1$, and completes the induction.
\end{proof}

% so the first blue layer lies in a box with lower-left endpoint at the origin.
% At iteration $k$, the location of the bottom of the $m$ intervals corresponds to 
% a box with the lower right endpoint positioned at a point with 
% $x$-coordinate $-h_k$ and $y$-coordinate so that the $k$-th iteration is 
% placed immediately above the $(k-1)$-st. We first assume that the, say, leftmost
% columns (of height $1/\sqrt p$) of each of the $m$ box-intervals at every iteration
% all contain a blue site. We then prove the blocking lemma below, 
% with a slightly lower speed $\alpha$ due to rounding. 
% Finally, we observe that decreasing the speed $\alpha$ further 
% translates into the corresponding increase on entropy, which increases 
% exponentially with each iteration. 
The \textbf{blocking configuration centered at $(x,y)\in \bZ^2$} is defined by shifting the blocking configuration centered at $(0,0)$ by $(x,y)$, and as above, a blocking configuration is successful if every box in every layer is successful. We also define the \textbf{transposed blocking configuration centered at $(x,y)\in \bZ^2$} by reflecting the blocking configuration centered at $(0,0)$ over the $x$-axis, then shifting by $(x,y)$. Again, a transposed blocking configuration is successful if every box in every layer is successful.

The next lemma confirms that a collection of successful blocking configurations will not be interfered with by the growth of red sites, provided red sites are restricted to a certain region. Because blue sites expand in time, the red dynamics are not additive, but an additive dynamics can be used for comparison.
\begin{lemma}\label{additive_lemma}
    Let $x_1, x_2, \ldots, x_K\in \bR$. Suppose that for every $k=1, \ldots , K$, the blocking configuration centered at rescaled $(x_k,0)$ is successful, and that the initially red points are confined to the union of rescaled affine cones
    $$
    \bigcup_{k=1}^K \cC(x_k, - g_1/\alpha).
    $$
    Then every box in every blocking configuration contains a horizontal line spanning the box that is colored blue before the first time that red enters the smallest horizontal strip containing the box. 
\end{lemma}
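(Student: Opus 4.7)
My plan is to extend the induction in the proof of Lemma~\ref{successful_blocking_lemma} to all $K$ blocking configurations simultaneously, using the static-blue / additive-red comparison alluded to in the remark preceding the lemma to decouple the contributions from different cones. By monotonicity of the red dynamics in the initial red set, I first reduce to the case that every lattice point in $\bigcup_k \cC(x_k,-g_1/\alpha)$ is initially red.

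Next I set up the additive comparison. Let $\cL$ be the union, over all $k$ and all layers $\ell$, of the line segments that, by Lemma~\ref{successful_blocking_lemma} applied to each configuration in isolation, form across each box in configuration $k$ by the required time. I consider an auxiliary process in which the initial blue equals $\cL$, blue is frozen (does not grow), and red grows by the usual rule. In this frozen-blue environment, the red cluster at time $t$ from an initial set $R_0$ is the set of sites reachable from $R_0$ by a valid (obstacle-avoiding) lattice path of length at most $t/r$, so red growth is additive in the initial condition:
$$
R^{\mathrm{stat}}_t\!\Bigl(\bigcup_k R_0^{(k)}\Bigr) = \bigcup_k R^{\mathrm{stat}}_t\bigl(R_0^{(k)}\bigr).
$$
The moreover-clause of Lemma~\ref{successful_blocking_lemma} is what allows us to replace the dynamic blue by this static obstacle set: in each configuration separately, the static line segments already block red for the required time, and monotonicity shows that the blue in the original dynamics is never smaller at the relevant times.

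I then proceed by induction on the layer index $\ell$, in parallel with Lemma~\ref{successful_blocking_lemma}. The base case $\ell=0$ is immediate, since the time $\balpha/(2\sqrt{p})$ for blue to span any layer-$0$ box is less than the time $g_1 r/(\alpha\sqrt{p})$ for red to reach the $x$-axis from any cone. For the inductive step, fix a configuration $k$ and a box $\Box$ in layer $\ell+1$. By additivity, it suffices to show, for each $j$ separately, that red starting only from $\cC(x_j,-g_1/\alpha)$ does not reach the horizontal strip containing $\Box$ before time $\frac{1}{\sqrt{p}}[g_{\ell+1}+(\balpha-\alpha)h_\ell]$. When $j=k$, this is the ``moreover'' statement of Lemma~\ref{successful_blocking_lemma} applied to configuration $k$, noting that the extra line segments in $\cL$ coming from the other configurations can only enlarge first-passage times. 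When $j\ne k$, I appeal to the continuous comparison from Lemma~\ref{successful_blocking_lemma}: in that comparison, horizontal travel costs time $\alpha$ per unit and vertical travel is free after initialization, so the first-passage time from the apex of cone $j$ to any point with abscissa near $x_k$ exceeds that from the apex of cone $k$ by a non-negative horizontal penalty. Hence the single-cone timing bound carries over, and combining the $j=k$ and $j\ne k$ cases completes the inductive step.

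The main obstacle I expect is the $j\ne k$ case, since red from a distant cone could in principle bypass configuration $j$'s blocking structure and approach configuration $k$ from the side along an unanticipated route. The key point is that the frozen line-segment obstacles of the intermediate configurations only add obstacles to the first-passage metric, so they cannot create shortcuts; the continuous comparison used to derive the recursion $(f_{\ell+1},g_{\ell+1},h_{\ell+1})=\lambda(f_\ell,g_\ell,h_\ell)$ therefore remains valid after a horizontal shift of the source, yielding a bound at least as strong as in the single-configuration setting and closing the induction.
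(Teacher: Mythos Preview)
Your overall strategy matches the paper's: induct on the layer index $\ell$, freeze the already-established blue segments below, exploit additivity of red in that frozen environment, and invoke the ``moreover'' clause of Lemma~\ref{successful_blocking_lemma} for each cone separately. Two points, however, need to be fixed.

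First, replacing dynamic blue by the full static obstacle set $\cL$ \emph{before} the induction begins is circular as you have written it. The sentence ``monotonicity shows that the blue in the original dynamics is never smaller at the relevant times'' is precisely the conclusion of the lemma, not an input to it; more blue means slower red, so a bound on red in the static-$\cL$ process does not by itself bound red in the true process. The paper (and implicitly your own induction) avoids this by freezing only the segments in layers $0$ through $\ell$ at step $\ell$, which is justified by the inductive hypothesis. That suffices, because red's time to first enter the layer-$(\ell+1)$ strip depends only on obstacles strictly below that strip.

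Second, the $j=k$ versus $j\ne k$ split is unnecessary, and your $j\ne k$ argument is misdirected. The smallest horizontal strip containing a layer-$(\ell+1)$ box depends only on $\ell$, not on which configuration $k$ the box sits in. Hence ``red from cone $j$ enters this strip'' is the same event for every $k$, and the correct bound is simply Lemma~\ref{successful_blocking_lemma}'s moreover clause applied to configuration $j$: with configuration $j$'s own segments present in the frozen half-plane, red from $\cC(x_j,-g_1/\alpha)$ takes at least $\frac{1}{\sqrt p}[g_{\ell+1}+(\balpha-\alpha)h_\ell]$ to reach the strip, and the other configurations' segments only increase this. Your horizontal-penalty comparison to cone $k$ does not work, because red from cone $j$ may first enter the strip near abscissa $x_j$, never encountering configuration $k$'s obstacles at all. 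The paper makes no $j=k/j\ne k$ distinction for exactly this reason.
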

\begin{proof}
Lemma~\ref{successful_blocking_lemma} says that if red starts from rescaled $\cC(x_k, -g_1/\alpha)$ and the blocking configuration centered at rescaled $(x_k,0)$ is successful, then the dynamic blue environment achieves a configuration in which each box of the blocking configuration is crossed by a blue line before it comes in contact with red. This implies that we may assume that these blue line segments are present at time $0$ and blue is static thereafter.

To see that this is still true when we introduce multiple red cones and blue blocking configurations, we proceed inductively on the layer $\ell$. Clearly, the first time that red reaches the $x$-axis is still strictly larger than $\frac{\balpha}{2\sqrt{p}}$, which is an upper bound on the time that it takes for blue to cross the boxes in layer $0$. Therefore, we may assume that these boxes are crossed by blue initially, and blue does not grow within the smallest horizontal strip containing layer $0$. Now, we assume the statement holds through layer $\ell$, so all boxes in these layers are assumed to be crossed by blue initially, and that blue does not grow within the half-plane containing layers $0$ through $\ell$ and the initially red points. Then, until the first time red exits this half-plane, the red dynamics are additive, since they proceed according to a first-passage percolation process on a subset of $\bZ^2$. Therefore, by Lemma~\ref{successful_blocking_lemma}, the time it takes for red to first reach layer $\ell+1$ is at least $\frac{1}{\sqrt{p}}\left[g_{\ell+1} + (\balpha-\alpha) h_\ell \right]$, which is an upper bound on the time for blue to cross layer $\ell+1$. This completes the induction.
\end{proof}

\begin{lemma}\label{protection from below lemma}
    Let $\sigma = \frac{2 m\sqrt{\lambda}}{\sqrt{\lambda}-1}$ and let $x_1, x_2, \ldots, x_K\in \bR$. Suppose that for every $k=1, \ldots , K$, the blocking configuration centered at rescaled $(x_k,-\sigma)$ is successful, and that the initially red points are confined to the union of rescaled affine cones
    $$
    \bigcup_{k=1}^K \cC(x_k, -\sigma - g_1/\alpha).
    $$
    Then there are no red points on the $x$-axis through time $C/p$.
\end{lemma}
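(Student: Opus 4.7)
The plan is to deduce this from Lemma~\ref{additive_lemma} by a translation of coordinates, and then to combine the geometric-in-$\ell$ time bounds from Lemma~\ref{successful_blocking_lemma} with the telescoping identity for the total height of the blocking configuration. The numerical choice $\sigma=2m\sqrt\lambda/(\sqrt\lambda-1)$ has been engineered precisely so that the sum of all layer heights equals $\sigma/2$, leaving a rescaled buffer of $\sigma/2$ between the top of the blocking stack and the $x$-axis.

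First, I would use translation invariance of the dynamics (and of the product initial distribution) to reduce to Lemma~\ref{additive_lemma}. Shifting the entire picture upward by rescaled $\sigma$ turns the hypothesis of the present lemma into the hypothesis of Lemma~\ref{additive_lemma} (successful blocking configurations centered at $(x_k,0)$ and red confined to $\bigcup_k\cC(x_k,-g_1/\alpha)$). Applying Lemma~\ref{additive_lemma} in the shifted frame and translating back, every box in every blocking configuration centered at $(x_k,-\sigma)$ is crossed by a blue line before red enters its horizontal strip, and, as shown in the induction of Lemma~\ref{successful_blocking_lemma}, the first time red enters the horizontal strip of layer $\ell$ of any of these configurations is at least
$$
\frac{1}{\sqrt{p}}\bigl[g_\ell+(\balpha-\alpha)h_{\ell-1}\bigr]\;\ge\;\frac{\lambda^\ell g_0}{\sqrt{p}}.
$$

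Next, I would exploit that $\lambda>1$ to push red's progress past $C/p$. Choose $\ell^\star$ with $\lambda^{\ell^\star}g_0>C\sqrt{p}$ (such an $\ell^\star$ is of order $\log(1/\sqrt p)$). Then red has not entered the strip containing layer $\ell^\star$ by time $C/p$, so its rescaled $y$-coordinate is strictly less than
$$
-\sigma+\sum_{\ell=0}^{\ell^\star-1}\frac{m}{\lambda^{\ell/2}}\;<\;-\sigma+\sum_{\ell=0}^{\infty}\frac{m}{\lambda^{\ell/2}}\;=\;-\sigma+\frac{m\sqrt\lambda}{\sqrt\lambda-1}\;=\;-\frac{\sigma}{2},
$$
using the definition of $\sigma$. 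In particular, no red point reaches the $x$-axis by time $C/p$, which is the claim.

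The main point requiring care is transporting Lemma~\ref{additive_lemma} faithfully into the shifted frame, and tracking that the red dynamics remain additive inside the half-plane below the blocking stack for the entire time interval $[0,C/p]$ (so that the $\ell$-by-$\ell$ induction of Lemma~\ref{additive_lemma} applies). Since the shift is purely vertical and the cellular automaton and initial law are translation-invariant, this is a bookkeeping check rather than a new ingredient; the only genuinely quantitative step is the telescoping computation of $\sigma/2$, which was built into the definition of $\sigma$ to ensure a strictly positive gap between the top of the blocking stack and the $x$-axis.
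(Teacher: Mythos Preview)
Your approach is essentially the paper's: invoke Lemma~\ref{additive_lemma} (after a vertical shift), use the geometric time bound $g_\ell/\sqrt p=\lambda^\ell g_0/\sqrt p$ to push red's arrival at layer $\ell^\star$ past $C/p$, and then sum the layer heights to check that layer $\ell^\star$ still sits below the $x$-axis. Two small corrections are needed. First, the threshold should read $\lambda^{\ell^\star}g_0>C/\sqrt p$ (not $C\sqrt p$); with $C\sqrt p$ the inequality is satisfied already at $\ell^\star=0$ for small $p$ and gives nothing. Second, the displayed height bound $\sum_{\ell<\ell^\star} m/\lambda^{\ell/2}$ is not an upper bound on the rescaled stack height, because each discrete layer has height $m\lceil p^{-1/2}\lambda^{-\ell/2}\rceil$, which after rescaling by $\sqrt p$ can exceed $m/\lambda^{\ell/2}$; the paper absorbs the ceilings via
\[
m\sum_{\ell=0}^{\ell^\star}\Bigl\lceil \tfrac{1}{\lambda^{\ell/2}\sqrt p}\Bigr\rceil
\;\le\; m(\ell^\star+1)+\frac{m\sqrt\lambda}{(\sqrt\lambda-1)\sqrt p}
\;\le\;\frac{\sigma}{\sqrt p}
\]
for small $p$, which is exactly why $\sigma$ carries the factor $2$. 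With these two fixes your argument matches the paper's proof.
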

\begin{proof}
    By Lemmas~\ref{successful_blocking_lemma} and~\ref{additive_lemma}, blue sites will cross the boxes in layer $\ell$ of the successful blocking configurations centered at rescaled $(x_1, -\sigma), \ldots, (x_K,-\sigma)$ before red first enters the smallest horizontal strip containing layer $\ell$. Since blue spreads at speed $1$, it therefore takes time at least $\frac{g_\ell}{\sqrt{p}} = \frac{g_0 \lambda^\ell}{\sqrt{p}}$ for red to reach layer $\ell$. Therefore, if $\ell\ge \frac{\log(1/p)}{\log \lambda}$, then for small $p$ the time for red to reach layer $\ell$ is at least $C/p$; choose $\ell$ to be the smallest such integer. The total height of the blocking configuration centered at $(0,0)$ through layer $\ell$ is at most
    $$
    m\sum_{k=0}^\ell \left\lceil \frac{1}{\lambda^{k/2}\sqrt{p}} \right\rceil \le m(\ell+1) + m\sum_{k=0}^\infty \frac{1}{\lambda^{k/2}\sqrt{p}} = m(\ell+1) + \frac{m\sqrt{\lambda}}{(\sqrt{\lambda}-1)\sqrt{p}} \le \frac{\sigma}{\sqrt{p}}
    $$
    for small $p$, by our choice of $\sigma$. Thus, shifting the blocking configuration down by rescaled $\sigma$ puts the $\ell$th layer below the $x$-axis, and prevents red from reaching the $x$-axis by time $C/\sqrt{p}$. 
\end{proof}

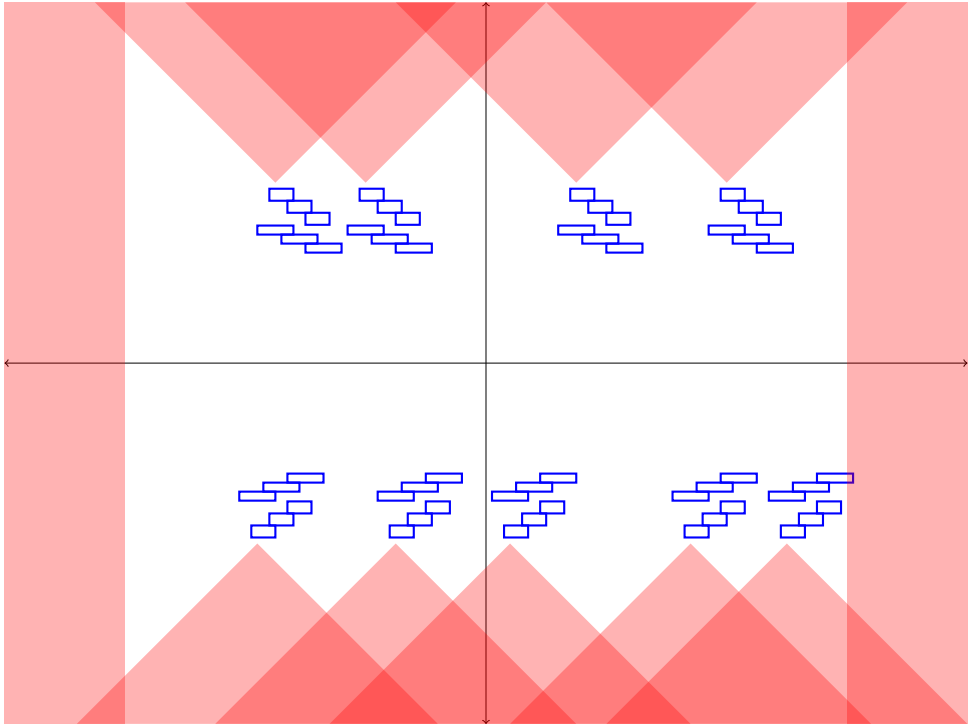
\begin{figure}[ht!]
\begin{center}
\scalebox{0.8}{
\begin{tikzpicture}
%\draw[step=0.25,faint,very thin] (0,0) grid (2,4);
%\draw[step=1cm,black,thick] (0,0) grid (8,8);

%\fill[lgrey] (0.75,0) rectangle (1,1);

%\draw[color=black, ->] (-3,0)--(8,0);
%\draw[color=black, ->] (0,0)--(0,8);

\def\a{1}
\def\f{4}
\def\m{3}

\pgfmathsetmacro\l{{(\m+\a*\m-1)/(\m+1)}}
\pgfmathsetmacro\s{{2*\m*sqrt(\l)/(sqrt(\l)-1}}
\pgfmathsetmacro\g{{\a*\f/2}};
\pgfmathsetmacro\h{{(\m*\g-\f)/(\m+1)}}
\pgfmathsetmacro\fp{\l*\f}
\pgfmathsetmacro\hp{\l*\h}
\pgfmathsetmacro\gp{\l*\g}

\draw[color=black, <->] (-8,0)--(8,0);
\draw[color=black, <->] (0, -6)--(0,6);

\foreach \k in {-3.8,-1.5,0.4,3.4,5}
{
\fill[opacity=0.3,red]{(\k-3,-6)--(\k, -3)--(\k+3,-6)};
 
\foreach \i in {1,...,\m}
{
 \draw[color=blue, line width=1] ({(\i-1)*0.4-\i*0.1+\k},\i*0.2-3-0.1) rectangle ({\i*0.4-\i*0.1+\k},\i*0.2-3+0.1);

\draw[color=blue, line width=1] 
({(\i-1)*0.6-\i*0.2+\k-0.1},\m*0.2+\i*0.15-3-0.075+0.075/2) rectangle ({\i*0.6-\i*0.2+\k-0.1},\m*0.2+\i*0.15-3+0.075+0.075/2);
} 
}

\foreach \k in {-3.5,-2,1.5,4}
{
\fill[opacity=0.3,red]{(\k-3,6)--(\k, 3)--(\k+3,6)};
 
\foreach \i in {1,...,\m}{
 
    \draw[color=blue, line width=1] 
    ({(\i-1)*0.4-\i*0.1+\k},-\i*0.2+3+0.1) rectangle ({\i*0.4-\i*0.1+\k},-\i*0.2+3-0.1);
  
    \draw[color=blue, line width=1] ({(\i-1)*0.6-\i*0.2+\k-0.1},-\m*0.2-\i*0.15+3+0.075-0.075/2) rectangle ({\i*0.6-\i*0.2+\k-0.1},-\m*0.2-\i*0.15+3-0.075-0.075/2);
}    
}

\fill[opacity=0.3,red] (-8,-6) rectangle (-6,6);
\fill[opacity=0.3,red] (6,-6) rectangle (8,6);
  
\end{tikzpicture}
}
\end{center}
\caption{Protection of the origin from red occupation, as 
described in Lemma~\ref{protection lemma}.  
Red is initially confined to the area shaded red. The blue
rectangles are the first two layers of each successful 
blocking configuration: each contains a blue site in its activation region  and is horizontally traversed  by a blue line by the 
time it contains a red site. The closest vertical  lines containing red points are
at distance on the order $1/p$ from the $y$-axis, while the vertices 
of the cones are at distance on the order $1/\sqrt p$ from the $x$-axis.
} 
\label{fig:protection}
\end{figure}

We now state our final protection lemma, which is illustrated in 
Figure~\ref{fig:protection}.

\begin{lemma}\label{protection lemma}
    Let $\sigma = \frac{2 m\sqrt{\lambda}}{\sqrt{\lambda}-1}$ and let $x_1, x_2, \ldots, x_K\in \bR$ and $x_1', \ldots, x_{K'}'\in\bR$. Suppose that for every $k=1, \ldots , K$, the blocking configuration centered at rescaled $(x_k,-\sigma)$ is successful, and for every $k=1, \ldots, K'$, the transposed blocking configuration centered at rescaled $(x_k',\sigma)$ is successful, and all initially red points are either in the union of rescaled affine cones, 
    $$
    \bigcup_{k=1}^K \cC(x_k, -\sigma - g_1/\alpha) \cup \bigcup_{k=1}^{K'} \cC'(x_k',\sigma+g_1/\alpha),
    $$
    or in the complement of a vertical strip around the origin,
    $$
    \left([-3C/(rp),3C/(rp)]\times \bZ \right)^c.
    $$
    Then there are no red points in $[-C/p,C/p]\times \{0\}$ at time $C/p$.
\end{lemma}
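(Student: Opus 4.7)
The plan is to combine three ingredients: a finite propagation-speed argument to discard the far-away initial red, Lemma~\ref{protection from below lemma} applied to the lower cones and blocking configurations, and a mirror-image version of the same lemma applied to the upper cones and transposed blocking configurations.

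First, I would use the fact that $r\le 1$ to bound the speed of information propagation in the dynamics by $1/r$ in the $\ell^1$-metric, so that the state of any site at time $C/p$ depends only on initial sites within $\ell^1$-distance $C/(pr)$. A short calculation shows that any initial red at $(x_0,y_0)$ with $|x_0|\ge 3C/(rp)$ lies at $\ell^1$-distance at least $3C/(rp)-C/p \ge 2C/(rp) > C/(pr)$ from the target $[-C/p,C/p]\times\{0\}$. Hence the outside-strip red has no influence on the state of the target at time $C/p$, and we may replace it by empty sites for the remainder of the analysis.

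Second, with only in-cone initial red remaining, I would invoke Lemma~\ref{protection from below lemma} for the lower cones $\cC(x_k,-\sigma-g_1/\alpha)$ together with the successful lower blocking configurations to conclude that no lower red reaches the target by time $C/p$. The analogous conclusion for the upper cones $\cC'(x_k',\sigma+g_1/\alpha)$ and successful transposed blocking configurations follows by the reflection symmetry of the dynamics across the $x$-axis: both $\cB=\{e_1\}$ and $\cR=\{\pm e_1,\pm e_2\}$ are invariant under this reflection, and the transposed blocking configuration is by definition the reflection of the blocking configuration, so the proofs of Lemmas~\ref{successful_blocking_lemma},~\ref{additive_lemma}, and~\ref{protection from below lemma} apply verbatim to the reflected system.

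The main obstacle is that the full dynamics simultaneously contains lower-cone and upper-cone reds, so one must justify that the lemma and its mirror can be applied independently in the two half-planes. I would handle this by running the inductive argument of Lemma~\ref{additive_lemma} for both halves in parallel: for each layer $\ell$, blue lines cross every box in the $\ell$-th layer of every lower blocking configuration and every upper transposed blocking configuration before any red (lower or upper, respectively) first enters the horizontal strip containing that box. Once these barriers are in place, they confine the lower red below the completed lower barriers and the upper red above the completed upper barriers within the central region, so the two halves do not interact at the $x$-axis through time $C/p$. Combining the parallel induction with the first step yields the absence of red in $[-C/p,C/p]\times\{0\}$ at time $C/p$.
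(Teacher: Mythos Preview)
Your proposal is correct and follows essentially the same approach as the paper's proof: a speed-of-light argument to discard far-away red, combined with Lemma~\ref{protection from below lemma} and its reflection for the two families of cones, together with the observation that the upper and lower halves do not interact at the $x$-axis through time $C/p$. The only cosmetic differences are that the paper does the cone argument first and the speed-of-light step second, uses the slightly looser propagation bound $C(\frac1r+1)/p\le 2C/(rp)$ rather than your $C/(rp)$, and handles the non-interaction of the two halves in one sentence whereas you spell out the parallel induction explicitly.
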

\begin{proof}
    If we only allow red points initially in the union of rescaled affine cones, then Lemma~\ref{protection from below lemma} and the observation that the red points above and below the $x$-axis do not interact by time $C/p$ implies that there are no red points anywhere on the $x$-axis by time $C/p$. Now observe that by a standard speed-of-light argument, the state of a vertex $(x,0)$ for $x\in [-C/p,C/p]$ at time $C/p$ depends only on the initial colors of sites within distance $C(\frac1r +1)/p\le 2C/(rp)$. Therefore, allowing red (or blue) points outside of the strip $[-3C/(rp),3C/(rp)]\times \bZ$ will not change the states of the vertices in $[-C/p,C/p]\times \{0\}$ at time $C/p$.
\end{proof}

Lemma~\ref{protection lemma} describes a region that needs
to be void of initially red sites to protect the origin
from red occupation up to time on the order $1/p$. This region will be random --- dependent on 
the initial configuration of blue sites --- and we 
proceed to show that it is 
unlikely to be too large.

For every $k\in\bZ$, we define $I_k$ to be the indicator of the event that there is a successful blocking configuration at rescaled $(k,-\sigma)$, with $\sigma$ as defined in Lemma~\ref{protection from below lemma}. Then, for each $k\in\bZ$, we define $Z_k = \inf \{\ell\ge 0 : I_{k+\ell}=1\}$ to be the (rescaled) distance from $k$ to the next successful blocking site to its right.

\begin{lemma}\label{lem:finite gaps}
    Let $B>0$. Assume the initial probability of a vertex being blue is $Bp$. Then there exists a large enough $B$ such that 
    $$
    \sup_{0<p\le 1/B} \bE Z_0<\infty.
    $$
\end{lemma}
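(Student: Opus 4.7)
The strategy is twofold. First, to show that $\P(I_0 = 1) \ge 1 - \eta(B)$ with $\eta(B) \to 0$ as $B \to \infty$, uniformly in small $p$. Second, to extract enough independence from the events $\{I_k = 1\}_k$ to prove that $\P(Z_0 \ge n)$ decays rapidly in $n$. The delicate point is that blocking configurations centered at different $k$ have horizontal extent of order $1/p$ in rescaled units and therefore overlap heavily, so the events $\{I_k=1\}$ are far from independent for nearby $k$.

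The first step is essentially a direct computation. The activation region associated to layer $\ell$ and box $j$ has rescaled dimensions $(\balpha-\alpha) h_{\ell-1} \times \lambda^{-\ell/2}$ for $\ell\ge 1$ (and $\tfrac{1}{2}(\balpha-\alpha) \times 1$ for $\ell = 0$), so it contains of order $\lambda^{\ell/2}/p$ lattice sites, each independently blue with probability $Bp$. Hence the probability that a given layer-$\ell$ activation region is blue-free is at most $\exp(-Bc_\ell)$, with $c_0$ a positive constant and $c_\ell$ of order $\lambda^{\ell/2}$ for $\ell \ge 1$. A union bound over the $m$ boxes per layer, together with the super-exponentially convergent sum over $\ell$, yields $\P(I_0 = 0) \le Ce^{-cB}$, which is at most $1/2$ once $B$ is large.

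For the tail bound, I would introduce a cutoff $\ell^* = \lceil 2\log\log n/\log \lambda \rceil$, so that $\lambda^{\ell^*/2}$ is of order $\log n$ and $w_{\ell} := (\balpha - \alpha)h_{\ell-1}$ is at most $C(\log n)^2$ for every $\ell < \ell^*$. The key disjointness observation is that within a given layer, different box indices sit at distinct $y$-coordinates (so their activation regions are automatically disjoint), while for the same $(\ell,j)$ and two shifts $k\neq k'$, the activation regions are horizontal translates of width $w_\ell$ and hence disjoint whenever $|k - k'| > w_\ell$. Write
\begin{equation*}
\{Z_0 \ge n\} \subseteq \cE_{\text{high}} \cup \cE_{\text{low}},
\end{equation*}
where $\cE_{\text{high}}$ is the event that some layer-$\ge \ell^*$ activation region of some configuration $k\in[0,n-1]$ is blue-free, and $\cE_{\text{low}}$ is the complementary scenario in which every $k\in[0,n-1]$ has a failure at a layer $<\ell^*$. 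A naive union bound gives $\P(\cE_{\text{high}}) \le nm\sum_{\ell \ge \ell^*}e^{-Bc_\ell} \le Cn^{1-Bc'}$ for some $c'>0$. For $\cE_{\text{low}}$, restrict attention to the subset $S = \{0, N_0, 2N_0, \ldots\}\cap[0,n-1]$ with $N_0 = \lceil \max_{\ell<\ell^*} w_\ell\rceil + 1$, which has $|S| \gtrsim n/(\log n)^2$; by disjointness the low-layer failure events for $k\in S$ are independent, each of probability at most $2me^{-Bc_0}$, giving $\P(\cE_{\text{low}}) \le \exp(-\Omega(n/\log^2 n))$.

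Combining, $\P(Z_0 \ge n) \le Cn^{1-Bc'} + \exp(-\Omega(n/\log^2 n))$ with all constants independent of $p$; choosing $B$ large enough makes both terms summable in $n$, yielding $\sup_{0<p\le 1/B}\bE Z_0 < \infty$. I expect the main obstacle to lie in calibrating $\ell^*$: it must be chosen so that the naive union bound on $\cE_{\text{high}}$ stays small while the spaced subset $S$ in $\cE_{\text{low}}$ still contains $\omega(1)$ configurations, and obtaining both error terms simultaneously summable in $n$ depends on the super-exponential separation between the rates $c_\ell \sim \lambda^{\ell/2}$ at which layer-$\ell$ activation regions become unlikely to be empty.
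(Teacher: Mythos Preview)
Your approach is correct and genuinely different from the paper's. Both arguments share the same first step: the layer-wise estimate $\P(\text{layer }\ell\text{ has an empty activation region})\le m\exp(-B\delta\lambda^{\ell/2})$, obtained exactly as you describe. From there, the paper proceeds by a self-referential recursion: it conditions on the event $A_\ell$ that layer $\ell$ is the \emph{first} unsuccessful layer of the configuration at $0$, observes that this conditioning only touches activation regions of horizontal rescaled width $O(\lambda^\ell)$ and hence leaves $I_k$ unaffected for $k\ge\lceil D\lambda^\ell\rceil$, and thereby obtains the inequality $P_z\le m\sum_{\ell\ge 0}\exp(-B\delta\lambda^{\ell/2})\,P_{z-\lceil D\lambda^\ell\rceil}$, which is then solved by induction to give $P_z\le 2^{-\sqrt{z}}$. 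Your route instead fixes a threshold layer $\ell^*=\ell^*(n)$ and handles the tail $\{Z_0\ge n\}$ by a union bound on high-layer failures together with genuine independence (via spatial thinning to a subset $S$ of spacing $\max_{\ell<\ell^*}w_\ell$) for low-layer failures. The paper's recursion is slicker and delivers a cleaner stretched-exponential tail; your argument is more hands-on and yields only $\P(Z_0\ge n)\le Cn^{1-cB}+\exp(-\Omega(n/\log^2 n))$, but it is arguably more transparent about where the independence comes from, and it suffices for the stated conclusion. One small point: your claimed bound $2me^{-Bc_0}$ for the single-site low-layer failure probability should really be the full sum $m\sum_{\ell\ge 0}e^{-Bc_\ell}$, but this is still a constant $\eta(B)<1$ for $B$ large, so the conclusion is unaffected.
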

\begin{proof}
    For $z\in \bZ$, let $P_z = \prob{Z_0\ge z}$ be the probability that there are $z$ consecutive $0$s starting at $0$: $I_0= I_1=\cdots = I_{z-1}=0$. For $\ell\ge 0$, let $A_\ell$ be the event that the $\ell$th layer of the blocking configuration centered at $(0,-\sigma)$ is not successful, and each layer $\ell'<\ell$ is successful. Partitioning according to these events, we have for $z\ge 1$,
    \begin{equation}\label{eq:layer partition}
        P_z = \sum_{\ell=0}^\infty \prob{A_\ell} \prob{Z_0\ge z | A_\ell} \le \sum_{\ell=0}^\infty \prob{A_\ell} \prob{Z_0\ge z - \lceil D\lambda^{\ell}\rceil}
    \end{equation}
    where $D$ is a constant such that $D\lambda^{\ell}>(\balpha-\alpha)h_{\ell-1}$ for every $\ell\ge 1$. The last inequality follows because on the event $A_\ell$, the layers in the blocking constructions strictly above layer $\ell$ remain unaffected by the conditioning, and the layer $\ell'\le \ell$ has activation regions with rescaled width at most $\lceil (\balpha - \alpha)h_{\ell-1}\rceil$. Therefore, we have that $I_k, I_{k+1}, \ldots$ are independent of $A_\ell$ whenever $k\ge \lceil (\balpha - \alpha)h_{\ell-1}\rceil$.
    
    The event $A_\ell$ implies that there is a box in the $\ell$th layer of the blocking configuration centered at $(0,-\sigma)$ that is not successful. By a union bound on the $m$ activation regions in the $\ell$th layer, for $\ell\ge1$ this probability is at most
    \begin{equation*}
    \prob{A_\ell}\le m (1-Bp)^{\lambda^{-\ell/2}\cdot (\balpha-\alpha)h_{\ell-1} / p} \le m\exp\left[-B \delta \lambda^{\ell/2}\right],
    \end{equation*}
    for a small enough constant $\delta>0$. By possibly decreasing $\delta$, we have
    \begin{equation}\label{eq:layer prob}
    \prob{A_\ell} \le m\exp\left[-B \delta \lambda^{\ell/2}\right]
    \end{equation}
    for all $\ell\ge 0$. Combining inequalities~\eqref{eq:layer partition} and \eqref{eq:layer prob} gives
    \begin{equation}\label{eq:prob recursion}
        P_z \le m\sum_{\ell=0}^\infty \exp\left[-B \delta \lambda^{\ell/2}\right] P_{z-\lceil D\lambda^{\ell}\rceil}.
    \end{equation}

    We now claim that we can choose $B$ sufficiently large so that $P_z \le 2^{-\sqrt{z}}$ for all $z\ge 0$. Clearly, for $z\le 0$ we have $P_z = 1 = 2^0$, and we proceed by induction. By~\eqref{eq:prob recursion} and the induction hypothesis, we have
    \begin{equation}\label{eq:prob recursion solved}
        \begin{aligned}
          P_z &\le m\sum_{\ell=0}^\infty \exp\left[-B \delta \lambda^{\ell/2}\right] 2^{-\sqrt{\max(z-2D\lambda^\ell,0)}} \\
          &\le m\sum_{\ell=0}^\infty \exp\left[-B \delta \lambda^{\ell/2}\right] 2^{-\sqrt{z}+\sqrt{2D\lambda^\ell}} \\
          &= 2^{-\sqrt{z}}\cdot m\sum_{\ell=0}^\infty \exp\left[-\lambda^{\ell/2}\left(B \delta - \sqrt{2D}\log2\right)\right].
        \end{aligned}
    \end{equation}
    The series in the last line converges for large enough 
$B$, and decreases to $0$ as $B\to\infty$, so we choose $B$ such that the series is smaller than $1/m$, which justifies the induction step, and proves the claim. It follows that $\bE Z_0 = \sum_{z\ge1} P_z \le \sum_{z\ge 1} 2^{-\sqrt{z}}$ for all $p\in (0,1/B]$.
\end{proof}

The next lemma provides the key step in the proof of Theorem~\ref{thm:1vrho}. Due to the rescaling, 
the size of the red-free area will have the area of the set $F$, defined below, multiplied by $1/p$.

\begin{lemma}\label{lem:red-free area}
    Assume the probability of a vertex being initially blue is $Bp$ and $B$ is chosen as in Lemma~\ref{lem:finite gaps}. Let $\cI = \{k\in \bZ : I_k=1, -3C/(r\sqrt{p}) \le k\le 3C/(r\sqrt{p})\}$ and let
    $$
    F = \left(\bigcup_{k\in \cI} \cC(k,-\sigma-g_1/\alpha)\right)^c \cap ([-3C/(r\sqrt{p}),3C/(r\sqrt{p})]\times (-\infty,0]).
    $$
    For any $\epsilon>0$, we can choose either $C>0$ sufficiently small (depending on $w$) or $w$ sufficiently large (depending on $C$) so that
    $$
    \prob{\mathrm{Area}(F) \ge w/\sqrt{p}} <\epsilon
    $$
    for all sufficiently small $p>0$.
    \end{lemma}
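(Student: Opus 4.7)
The plan is to compute $\mathrm{Area}(F)$ as an explicit functional of the positions of the successful blocking sites in $\cI$, bound its expectation using the superpolynomial tail on $Z_0$ from Lemma~\ref{lem:finite gaps}, and then apply Markov's inequality on the event $\{\cI \ne \emptyset\}$ while handling the complementary event by the same tail bound.

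Setting $y_0 := -\sigma - g_1/\alpha$, a point $(x,y)$ lies in $\bigcup_{k\in \cI}\cC(k,y_0)$ iff $y \le y_0 - d(x)$, where $d(x) := \min_{k \in \cI}|x-k|$. On $\{\cI \ne \emptyset\}$, writing $\cI = \{k_1 < k_2 < \cdots < k_N\}$ and $W := k_{\max} - k_{\min} = 6C/(r\sqrt{p}) + O(1)$, integration over the strip $[k_{\min},k_{\max}]\times(-\infty,0]$ of the vertical slice $|y_0| + d(x)$ gives
\begin{equation*}
\mathrm{Area}(F) = |y_0|\,W + \sum_{i=1}^{N-1}\frac{(k_{i+1}-k_i)^2}{4} + \frac{(k_1 - k_{\min})^2}{2} + \frac{(k_{\max}-k_N)^2}{2}.
\end{equation*}

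I would then bound each piece in expectation on $\{\cI \ne \emptyset\}$. The first is deterministic with $|y_0|\,W = O(C/\sqrt{p})$ since $|y_0|$ does not depend on $p$. For the middle sum, any $j\in[k_{\min},k_{\max}]$ with $I_j=1$ belongs to $\cI$, so consecutive elements of $\cI$ are also consecutive successes in the stationary sequence $\{I_j\}_{j\in\bZ}$. Writing $G_k$ for the length of the gap of the full sequence that contains $k$, the identity $\sum_i (k_{i+1}-k_i)^2 = \sum_{k=k_1}^{k_N-1} G_k$ (each gap of length $g$ contributes $g\cdot g$) yields $\sum_i (k_{i+1}-k_i)^2 \le \sum_{k=k_{\min}}^{k_{\max}} G_k$. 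The tail bound $\prob{Z_0 \ge z} \le 2^{-\sqrt{z}}$ proved inside Lemma~\ref{lem:finite gaps} makes all polynomial moments of $Z_0$ finite; in particular the length-biased quantity $\E G_0$ is finite, so by stationarity $\E \sum_i (k_{i+1}-k_i)^2 \le (W+1)\E G_0 = O(C/\sqrt{p})$. For the two boundary squares, $k_1 - k_{\min} \le Z_{k_{\min}}$ on $\{\cI \ne \emptyset\}$ and $\E Z_0^2 < \infty$ by the same tail bound, so each contributes $O(1)$. Combining, $\E[\mathrm{Area}(F)\cdot \mathbbm{1}_{\{\cI \ne \emptyset\}}] \le K\,C/\sqrt{p}$ for some constant $K$ and all sufficiently small $p$.

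To conclude, I split
\begin{equation*}
\prob{\mathrm{Area}(F) \ge w/\sqrt{p}} \le \prob{\cI = \emptyset} + \prob{\mathrm{Area}(F) \ge w/\sqrt{p},\ \cI \ne \emptyset},
\end{equation*}
bound the first by $\prob{Z_{k_{\min}} > W} \le 2^{-\sqrt{W}} \to 0$ as $p \to 0$ (since $W\to\infty$), and bound the second by $KC/w$ via Markov's inequality. Either shrinking $C$ (for fixed $w$) or enlarging $w$ (for fixed $C$) makes $KC/w < \epsilon/2$, and then taking $p$ small forces the first probability below $\epsilon/2$. The main obstacle, and the reason the whole scheme works, is leveraging the superpolynomial tail on $Z_0$ from Lemma~\ref{lem:finite gaps}: a mere finite-mean estimate would not deliver $\E G_0 < \infty$ for the length-biased gap $G_0$ nor the $O(1)$ boundary contributions $\E(k_1 - k_{\min})^2$ and $\E(k_{\max} - k_N)^2$.
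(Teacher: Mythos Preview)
Your argument is correct and follows the same overall scheme as the paper --- bound $\mathrm{Area}(F)$ by a sum of a stationary quantity over the window, handle $\{\cI=\emptyset\}$ separately, and apply Markov --- but the choice of stationary summand differs in an instructive way. The paper exploits the triangular identity: whenever $I_0=\cdots=I_{k-1}=0$ one has $\sum_{j=0}^{k-1} Z_j \ge k^2/2$, so summing over the maximal runs of zeros yields $\mathrm{Area}(F)\le 2D\sum_{|k|\le 3C/(r\sqrt{p})} Z_k$ directly (up to harmless lower-order terms). This means only the first-moment statement $\E Z_0<\infty$ of Lemma~\ref{lem:finite gaps} is required. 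Your route instead passes through the length-biased gap $G_k$ via the identity $\sum_i(k_{i+1}-k_i)^2=\sum_k G_k$, which obliges you to invoke the stretched-exponential tail $\P(Z_0\ge z)\le 2^{-\sqrt{z}}$ (established inside the proof of Lemma~\ref{lem:finite gaps}) to get $\E G_0<\infty$ and to control the boundary second moments. Both decompositions are valid; the paper's is more economical in its hypotheses, while yours makes the geometric contribution of each gap more explicit. In particular, your closing remark that ``a mere finite-mean estimate would not deliver $\E G_0<\infty$'' is accurate for \emph{your} decomposition but is not a fundamental obstruction to the lemma, as the paper's $Z_k$-based bound demonstrates.
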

\begin{proof}
    Observe that for $k\ge 1$, the region below the $x$-axis, above the cones $\cC(0,-\sigma-g_1/\alpha)$ and $\cC(k,-\sigma-g_1/\alpha)$, and between the lines $x=0$ and $x=k$ has area
    $$
    k(\sigma + g_1/\alpha) + k^2/2 \le Dk^2
    $$
    for a constant $D>0$. Moreover, if $I_0=I_1=\ldots=I_{k-1}=0$, then $\sum_{j=0}^{k-1}Z_j \ge \frac12 k^2$. By decomposing the sequence $(I_k)$ into gaps between successive $1$s, we conclude that on the event $\mathrm{Area}(F)<\infty$, we have
    $$
    \mathrm{Area}(F)\le 2D \sum_{|k|\le 3C/(r\sqrt{p})} Z_k.
    $$
    Noting that the area of $F$ is finite if, say, $Z_0\le C/\sqrt{p}$, the conclusion now follows from Lemma~\ref{lem:finite gaps} and Markov's inequality.
\end{proof}

\subsection{Proof of Theorem~\ref{thm:1vrho} for $\rho=1$}
% We are now ready to give the proof of Theorem~\ref{thm:1vrho} 
% when the red growth has unit range.

\begin{proof}[Proof of parts (2) and (3) of Theorem~\ref{thm:1vrho} when $\rho=1$]
Lemma~\ref{lem:red wins} implies part (2). To prove part (3), we may replace $p$ by $Bp$, where the constant $B$ is chosen in Lemma~\ref{lem:finite gaps}. Fix an $\epsilon>0$, and choose $w = w(\epsilon,C)$ such that the area of the region $F$ defined in Lemma~\ref{lem:red-free area} satisfies
$$
\P(\mathrm{Area}(F)\ge w/\sqrt{p})<\epsilon.
$$
Let $H = \{\mathrm{Area}(F)\ge w/\sqrt{p}\}$ be the event above, and let $H'$ be the same event after the entire initial configuration has been reflected across the $x$-axis, so that $\P(H') = \P(H) <\epsilon.$ Let $F'$ be the region specified in the reflected event $H'$, which is the region protected by transposed blocking configurations in the analogous strip above the $x$-axis. Then, on the event $(H\cup H')^c$, if the rescaled region $F\cup F'$ is devoid of red points, then the conditions of Lemma~\ref{protection lemma} are met, so there will be no red points in $[-C/p,C/p]\times \{0\}$ at time $C/p$. Therefore,
\begin{equation}
    \begin{aligned}
        \P(0 \text{ is not eventually blue}) &\le \P(H\cup H')  \\
        &\quad + \P(\{\text{there is a red point in rescaled } F\cup F'\}\cap (H\cup H')^c) \\
        &\quad + \P(\text{there is no blue point in } (0, C/p]\times \{0\})\\
        &\le 2\epsilon + q(2w/p^{3/2}) + e^{-BC} \\
        &= 2\epsilon + 2aw B^{3/2} + e^{-BC}\\
        &< 4 \epsilon
    \end{aligned}
\end{equation}
by first choosing $C$ large, which along with $\epsilon$ determines $w$, then choosing $a$ sufficiently small. This completes the proof of (2) in the case where $a\to0$.

Now, fix $a>0$ and replace $p$ by $(B+1)p$. We will make a sprinkling argument, so we first assume that sites are occupied by blue with probability $Bp$ or red with probability $q = a((B+1)p)^{3/2}$. Choose $w< 1/(8a(B+1)^{3/2})$, and let $H$ and $H'$ be the same events as above. Choose $C>0$ sufficiently small so that $\probsub{Bp}{H\cup H'}<1/4$. Now, independently of everything, we add initially blue points with probability $p$ along the $x$-axis. Then, we have
\begin{equation}
    \begin{aligned}
        &\probsub{(B+1)p}{\text{0 is eventually blue}}\ge\\
        &\quad \left(1-\probsub{Bp}{H\cup H'} - \probsub{Bp}{\{\text{there is a red point in rescaled } F\cup F'\}\cap(H\cup H')^c} \right) \times\\
        &\qquad \probsub{p}{\text{there is a blue point in }(0,C/p]\times\{0\}}\\
        &\ge (1/2)(1-e^{-C})>0
    \end{aligned}
\end{equation}
for all small enough $p$. This completes the proof of the statement (2) for arbitrary fixed $a$.
\end{proof}

\section{Larger range for red}\label{sec:long range}

\begin{comment}
We now assume that the neighborhood $\cR$ is the $\ell^1$-ball 
of radius $\rho$. Then the following theorem holds.

\begin{theorem}\label{thm:1vrho-copy}
Suppose $q=a p^{1+\rho/(\rho+1)}$. Then for every $a>0$,
$$
\limsup_{p\to 0} \P(0 \text{ is eventually blue}) < 1
$$
and tends to $0$ as $a\to\infty$. Moreover, 
$$
\liminf_{p\to 0} \P(0 \text{ is eventually blue})>0
$$
and tends to $1$ as $a \to 0$. 
\end{theorem}
\end{comment}

In this section, we provide the necessary modification 
of our arguments to prove Theorems~\ref{thm:1vrho} and~\ref{thm:1d long-range} in full generality.

\begin{proof}[Proof of Theorem~\ref{thm:1vrho}]
To prove the statement (2), we need an analog of Lemma~\ref{lem:red wins}. Now, in order to stop red from growing, blue must form a horizontal strip of $\rho$ neighboring lines, which red cannot jump over. The vertical scale is now $p^{-\rho/(\rho+1)}$, and the new version of the event $H$ is 
\begin{equation*}
\begin{aligned}
    H &= \{\text{there is an initially red site } (x,y)\in [0,\epsilon/2p]\times [0,\epsilon / p^{\rho/(\rho+1)}] \text{ and} \\
    & \qquad \text{there is no set of } 
    %&\quad 
    \rho \text{ neighboring horizontal lines in }\\
    &\qquad [x-2\epsilon \tau r /p^{\rho/(1+\rho)}, x+2\epsilon \tau r/p^{\rho/(1+\rho)}] \times [0,\epsilon/p^{\rho/(1+\rho)}] \\
    &\qquad \text{such that each contains a potentially blue point}\}.
\end{aligned}
\end{equation*}
Let $G$ be defined as in~\eqref{eq:GH}. It follows that the event $G\cap H$ implies that the origin is eventually red, and if $H_1 = \{\text{there is an initially red site in } [0,\epsilon/2p]\times [0,\epsilon / p^{\rho/(\rho+1)}]\}$, then
$$
P(H^c \mid H_1) \le \epsilon p^{-\rho/(\rho+1)}\left(4\epsilon \tau r p^{-\rho/(\rho+1)}p\right)^\rho=(4\tau r)^\rho\epsilon^{\rho+1}.
$$
The rest of the proof is analogous to that of Lemma~\ref{lem:red wins}.

For the statement (3), the proof proceeds along the lines of the $\rho=1$ case. The blocking construction is the same, with the following modified definition of a successful box. A box is now called \textbf{successful} if its activation region has $\rho$ neighboring rows, each of which contains at least one blue site. We need to replace Lemma~\ref{lem:finite gaps} with a bound on $\bE Z_0$ for the gaps between consecutive blocking structures for the range-$\rho$ process, and we do so below. The rest of the proof is the same.

Assume we have a box with width $w$ and height $h$, where 
$wBp<1$.  
The probability that such a box is unsuccessful is  
bounded above by 
$$
(1-c(wBp)^\rho)^h\le \exp(-cB^\rho hw^\rho p^\rho),
$$
for a small enough $c>0$. 
Therefore, the upper bound for probability $\P(A_\ell)$ in the 
proof of Lemma~\ref{lem:finite gaps} is now for all $\ell\ge 0$
\begin{equation*}
\begin{aligned}
    \prob{A_\ell} &\le 
    m\exp\left[-cB^\rho \lambda^{-\ell/2}p^{-\rho/(\rho+1)}(\balpha-\alpha)^\rho h_{\ell-1}^\rho p^{-\rho^2/(\rho+1)} \cdot p^{\rho}\right]
    \\
    &\le m\exp\left[-B^\rho \delta \lambda^{(\rho-1/2)\ell}\right],
    \end{aligned}
    \end{equation*}
for a small enough $\delta> 0$. It follows that (\ref{eq:layer prob})
holds for all $\rho\ge 1$, so the computation (\ref{eq:prob recursion solved}) holds as well. 
(In fact, we can get exponential instead of stretched 
exponential tail bounds for $\rho\ge 2$.)
\end{proof}

\begin{comment}

Consider the competing one-dimensional growth processes, where red grows vertically with range $\rho$ and blue grows horizontally with range $\tau$. That is, $\cR = \{0\}\times ([-\rho,\rho]\cap \bZ)$ and $\cB = ([-\tau,\tau]\cap \bZ)\times \{0\}$. Since it will make no difference at our level of precision, we assume also that blue and red grow at the same rates, so $r=1$. Then we have the following theorem.

\begin{theorem}\label{thm:1d long-range-copy}
    For competing one-dimensional growth with $\rho,\tau \in \bZ_{\ge1}$, if $q = a p^{\gamma}$ where $\gamma = \frac{1}{\tau} + \frac{\rho}{\rho+1}$, then
   $$
0<\liminf_{p\to 0} \P(0 \text{ is eventually blue}) \le \limsup_{p\to 0} \P(0 \text{ is eventually blue}) < 1.
$$
Moreover, the $\limsup$ tends to $0$ as $a \to \infty$ and the $\liminf$ tends to $1$ as $a \to 0$.
\end{theorem}
\end{comment}
\begin{proof}[Proof of Theorem~\ref{thm:1d long-range}]
Since it makes no difference at our level of precision, we assume also that blue and red grow at the same rates, so $r=1$.
    We first prove the lower bound. For blue to reach the origin, it is sufficient for there to be an initially blue point in $[0,\delta/p]\times \{0\}$ and for no $\tau$ consecutive red points to reach this segment of the $x$-axis by time $\delta/(\tau p)$. To stop red from above, we subdivide the region $\bZ \times [0,N]$ into $N\times N$ disjoint, adjacent boxes. We call a box \textbf{successful} if it contains $\rho$ consecutive rows each with at least one initially blue point.

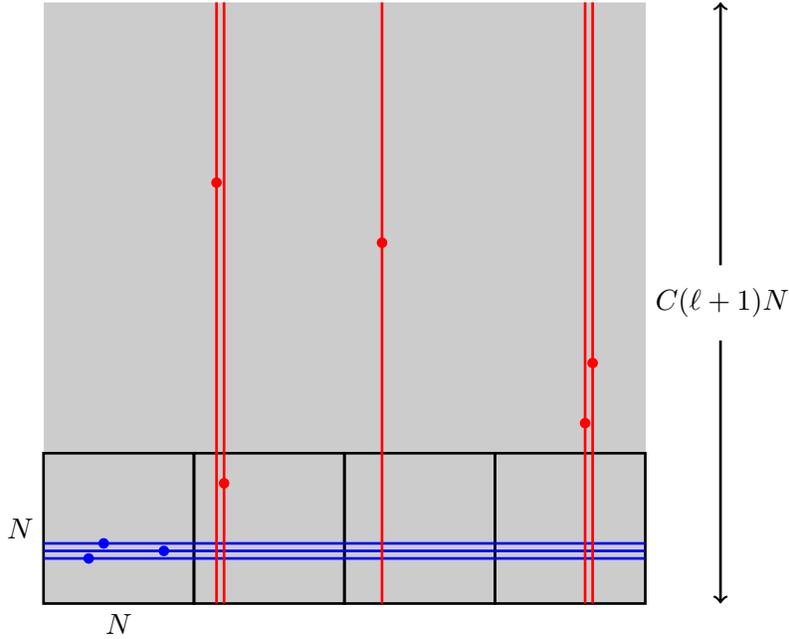
\begin{figure}[ht!]
\begin{center}
\begin{tikzpicture}
  
\fill[lgrey] (0,0) rectangle 
    (8,8); 
  \def\l{4}
  \def\a{{0.3, 0.8, 0.4}}
  \def\by{{0.7, 0.2, 0.6, 0.3, 0.4}}
  \def\bx{{2.3, 2.4, 4.5, 7.2,7.3}}
  \foreach \i in {1,...,\l}{
  \draw[color=black, line width=1] 
  (2*\i-2,0) rectangle (2*\i,2);
  }
  \foreach \i in {1,...,3}{
  \fill [blue] (2*\a[\i-1], {\i*0.1+0.5}) circle (2pt);
   \draw[color=blue, line width=1] (0,{\i*0.1+0.5})--(8,{\i*0.1+0.5});
  }
  \foreach \i in {1,...,5}{
  \fill [red] (\bx[\i-1], 8*\by[\i-1]) circle (2pt);
   \draw[color=red, line width=1] (\bx[\i-1],0)--(\bx[\i-1],8);
  }
  \node [below] at (1,0) {$N$};
  \node [left] at (0,1) {$N$};
  \node[right] at (8,4){$C(\ell+1)N$};
   \draw[color=black, line width=1, ->] (9,4.5)--(9,8);
   \draw[color=black, line width=1, <-] (9,0)--(9,3.5);
  
\end{tikzpicture}
\end{center}
\caption{Protection from red in the proof of the lower bound in Theorem~\ref{thm:1d long-range}, with $\rho=\tau=3$, $Z_k=3$. All initially red sites (circled) in the shaded box are covered by the red lines, no three of which are neighboring. The three initially blue sites (also circled) therefore generate the blue neighboring lines before the 
red from above the box can interfere with them.} 
\label{fig:long range protection}
\end{figure}

    For $k\in\bZ$, we define $I_k$ to be the indicator of the event that the $k$th box is successful. Then, for each $k\in\bZ$, we define $Z_k = \inf \{\ell\ge 0 : I_{k+\ell}=1\}$ to be the number of boxes from the $k$th box to the next successful box to its right. We choose $N$ just large enough so that $EI_k\ge 1/2$, so $N\asymp (1/p)^{\rho/(\rho+1)}$. Then, since boxes are successful independently, it follows that $E(Z_k^{\tau+1}) =E(Z_1^{\tau+1})<\infty$ for all $k$, so by Markov's inequality, for any $\epsilon>0$ we can find $C$ so that
    \begin{equation}\label{eq:gap sizes}
    \P\left(\sum_{k=1}^{\delta/{Np}} (Z_k)^{\tau+1} \ge C\delta/(Np)\right)<\epsilon.
    \end{equation}
    We define the region, $F$, above the $x$-axis, which is unprotected by successful blue boxes, as follows. 
    For each interval $[k,k+\ell]$ such that $I_k = I_{k+\ell}=1$ and $I_{k+1} = \cdots = I_{k+\ell-1}=0$, we include in $F$ the rectangular region with dimensions $(\ell+1)N\times C(\ell+1)N$ that
    shares lower left corner with the $k$th box and lower right corner with the $(k+\ell)$th box. 
    The constant $C$ here is chosen sufficiently large so that the blue lines growing from the successful boxes at $k$ and $k+\ell$ will fill the gap between them before any red starting above the rectangular region can reach height $N$. See~Figure~\ref{fig:long range protection}. For one such region, the probability that it contains $\tau$ neighboring vertical lines, each containing at least one initially red point, is bounded above by $C (\ell N)^{\tau+1} q^\tau$. If 
    $$
    \sum_{k=1}^{\delta/{Np}} (Z_k)^{\tau+1} < C\delta /(Np),
    $$ 
    then the probability that any such rectangle in $F$ has this property is at most $C\delta N^{\tau+1}q^\tau / (Np) \le C\delta a^{\tau}$. For large $a$, this can be made small by choosing $\delta$ small enough. 
    The interval $[0,\delta/p]\times\{0\}$ is protected from below by an analogous construction with positive probability, 
    Therefore, the origin will be eventually blue with positive probability. This shows that the $\liminf$ in the statement is positive for every $a>0$. By taking $\delta$ large, the probability that there is a blue point in $[0,\delta/p]\times\{0\}$ can be made close to $1$, then by taking $a$ sufficiently small, this interval will be protected from above and below with probability close to $1$. This shows that the $\liminf$ tends to $1$ as $a\to 0$.

We now sketch the proof of the upper bound on the $\limsup$ in the statement of the theorem, which  is again analogous to the proof of Lemma~\ref{lem:red wins}. For $x\in \bZ$, define the events
     \begin{equation}\label{eq:GH 1d}
    \begin{aligned}
        G &= \{\text{there is no potentially blue site in } [-\epsilon \tau /p,\epsilon \tau /p]\times\{0\}\}, \quad \text{and} \\
        H(x) &= \{\text{the $\tau$ consecutive vertical lines $[x,x+\tau-1]\times [0,\epsilon/p^{\rho/(1+\rho)}]$}
        \\
        &\qquad 
        \text{each contain an initially red site}
        \\
        &\qquad\text{and there is no 
        set of $\rho$ neighboring horizontal lines}
        \\
        &\qquad \text{in $[x-2\epsilon \tau/p^{\rho/(1+\rho)}, x+2\epsilon \tau/p^{\rho/(1+\rho)}]\times [0,\epsilon/p^{\rho/(1+\rho)}]$}
 \\
    &\qquad \text{such that each contains a potentially blue point}\}.
    \end{aligned}
    \end{equation}
    Now define 
    \begin{equation}
    H^+ = \bigcup_{x \in [0, \epsilon/2p]} H(x), \quad \text{ and } \quad
    H^- = \bigcup_{x \in [-\epsilon/2p,0]} H(x).
    \end{equation}
    If $H(x)$ occurs, the $\tau$ consecutive vertical lines 
    will eventually produce $\tau$ consecutive red sites on 
    the $x$-axis, which cannot be crossed by blue. Therefore, 
    on the event $G\cap H^+\cap H^-$, the origin never becomes blue. 
    For any fixed $\epsilon>0$, the probability that there exist $\tau$ consecutive vertical lines in $[0,\epsilon/2p]\times [0,\epsilon/p^{\rho/(1+\rho)}]$ and $\tau$ consecutive vertical lines in $[-\epsilon/2p,0]\times [0,\epsilon/p^{\rho/(1+\rho)}]$, each with an initially red site, is bounded away from $0$ for any $a>0$, and tends to $1$ as $a\to\infty$. Moreover, conditional on this event, the probability of $G\cap H^+\cap H^-$ is also bounded away from $0$ for small enough $\epsilon$, and tends to $1$ as $\epsilon\to 0$.
\end{proof}

\begin{proof}[Proof of Corollary~\ref{thm:1d three phases}]
    First, we check that $\gamma'\gamma>1$.
    \begin{equation*}
        \begin{aligned}
            \gamma'\gamma &= \left(\frac{1}{\rho}+\frac{\tau}{\tau+1}\right)\left(\frac{1}{\tau}+\frac{\rho}{\rho+1}\right)\\
            &=1 + \frac{1}{\rho\tau} + \frac{1}{(\rho+1)(\tau+1)}>1.
        \end{aligned}
    \end{equation*}
    Then by Theorem~\ref{thm:1d long-range}, when $q\gg p^\gamma$, the origin is eventually blue with probability tending to $0$. Reversing the roles of blue and red and exchanging $\rho$ and $\tau$, Theorem~\ref{thm:1d long-range} implies that when $p\gg q^{\gamma'}$, the origin is eventually red with probability tending to $0$, and the corollary follows.
\end{proof}

\section{Extensions and open problems}\label{sec:open}

We begin with a natural question on a more precise form of our main two theorems.

 \begin{open}
 Are the $\liminf$ and $\limsup$ in Theorem~\ref{thm:1d long-range} and  Theorem~\ref{thm:1vrho} equal, i.e., does the limit exist?
 \end{open}

Processes with three (or more) colors are a natural generalization. 
Here, we will merely touch on them, with a result and open 
problems on two basic cases. The third color will be green, and 
the initial densities of the blue, red, and green colors will be 
$p_b$, $p_r$, and $p_g$. Also, we will assume that $r=1$ and that 
the neighborhood for the green growth is $\cG\subset\bZ^2$. Then, 
an empty vertex that has a colored neighbor in $x+\cB$, $x+\cR$, 
or $x+\cG$ at time $t$ changes to the corresponding color at time $t+1$, 
with ambiguities resolved at random. 

First consider the case when red and blue colors undergo
one-dimensional nearest neighbor growth in orthogonal directions, as in Corollary~\ref{thm:1d three phases} with $\rho=\tau=1$, 
$\cB=\{\pm e_1\}$, $\cR=\{\pm e_2\}$; whereas green undergoes
two-dimensional nearest-neighbor growth, $\cG=\{\pm e_1,\pm e_2\}$. We also assume that  $p_b=p_r=p$ and $p_g=q$ are small. 
See Figure~\ref{fig:3-color-211} (left). 

\begin{figure}[ht!]
    \centering
    \includegraphics[width=0.45\linewidth]{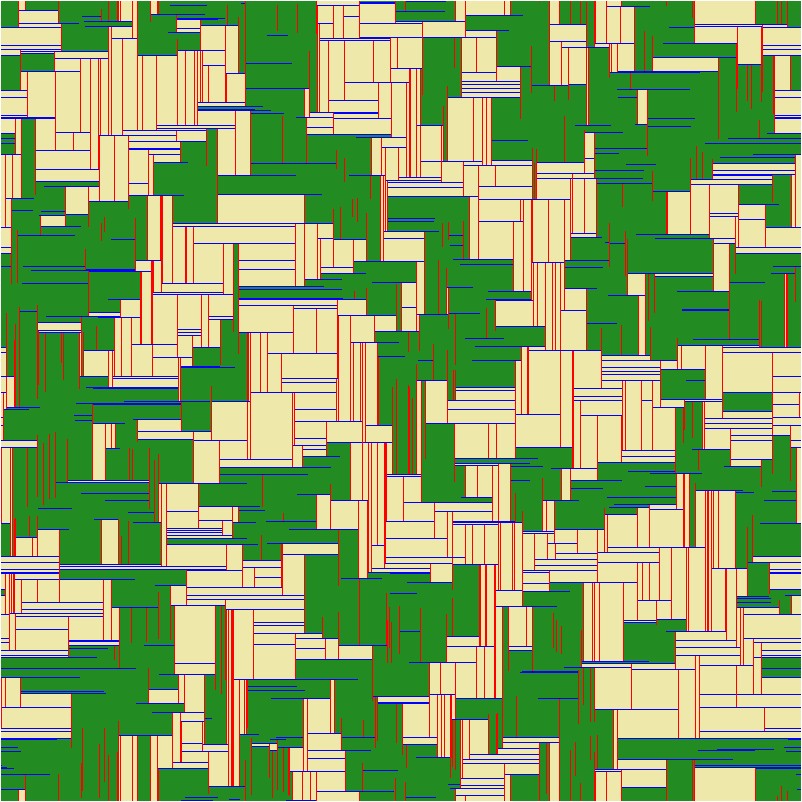}
    \includegraphics[width=0.45\linewidth]{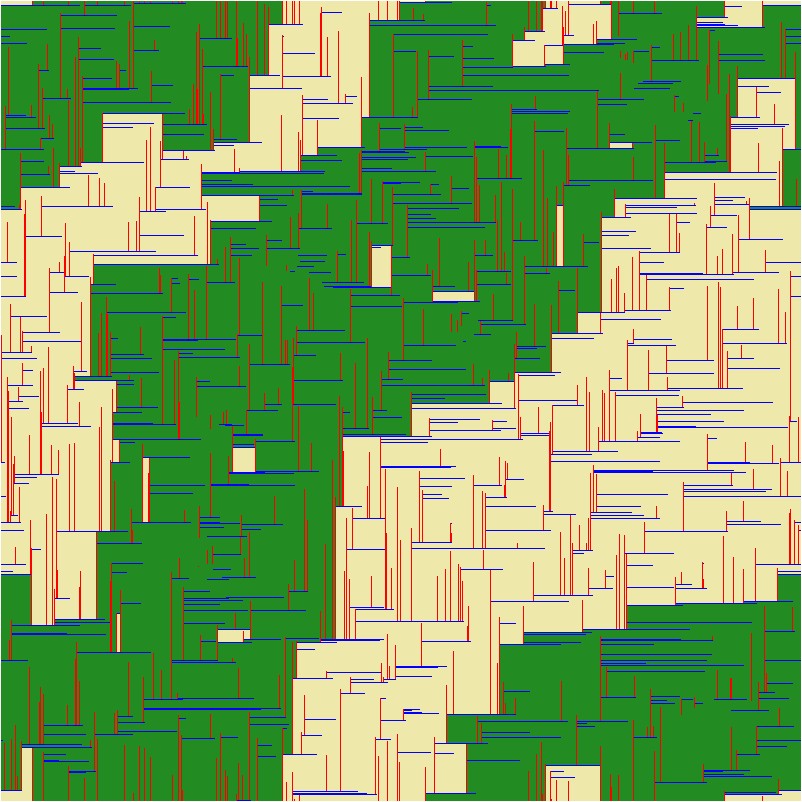}
    \caption{Two cases of three color dynamics, with one-dimensional 
    growth by blue and red and two-dimensional growth by green. Left: $\cB=\{\pm e_1\}$, $\cR=\{\pm e_2\}$, $\cG=\{\pm e_1,\pm e_2\}$, 
    $p_b=p_r=0.001$, $p_g=0.002$. Right: $\cB=\{e_1\}$, $\cR=\{e_2\}$, $\cG=\{\pm e_1,\pm e_2\}$, 
    $p_b=p_r=0.001$, $p_g=0.00001$.}
    \label{fig:3-color-211}
\end{figure}

\begin{theorem}\label{thm:3-color}
If $p\ll q$, then $\P(\text{\rm the origin is eventually green})\to 1$, 
while if $p\gg q$, then $\P(\text{\rm the origin remains empty})\to 1$.
\end{theorem}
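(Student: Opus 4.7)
The plan is to handle the two regimes separately.

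For the case $p\ll q$, I would use a direct first-moment computation. Set $L=\lceil C/\sqrt q\rceil$ for a large constant $C$. With probability tending to $1$ (as first $p\to 0$ with $p/q\to 0$, then $C\to\infty$), three events hold: (i) at least one initial green site lies in $[-L,L]^2$, since $qL^2\asymp C^2$; (ii) no initial blue or red site lies in $[-L,L]^2$, since $pL^2\lesssim pC^2/q\to 0$; (iii) no initial blue site lies in $[-3L,3L]\times\{0\}$ and no initial red site in $\{0\}\times[-3L,3L]$, since $pL\lesssim Cp/\sqrt q\to 0$ (note $p\ll q$ together with $q\to 0$ implies $p\ll\sqrt q$). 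On this event the initial green site in the box spreads in both dimensions unimpeded and colors the origin by time $2L$, before any blue or red from outside can enter the box.

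For the case $p\gg q$, I would first observe that since $p^{3/2}\ll p\ll p^{2/3}$ for small $p$, Corollary~\ref{thm:1d three phases} with $\rho=\tau=1$ gives that the two-color dynamics with both blue and red densities equal to $p$ leaves the origin empty with probability tending to $1$. Couple two-color and three-color with the same initial configuration (treating initial green as empty in two-color). Green sites in three-color act as additional obstacles for blue and red, so the eventual blue and red reaches in three-color are contained in those in two-color; in particular the origin is not eventually blue or red in three-color.

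It remains to show the origin is not eventually green. My plan is to establish that, with high probability, the origin lies in a bounded empty region $\mathcal E_0$ enclosed by a cage of sites eventually colored in the two-color dynamics, and that $\E|\mathcal E_0|\le C/p$. Given this bound, Markov's inequality together with $q\ll p$ yields that with high probability $\mathcal E_0$ contains no initial green site. A separate timing argument then rules out external green: the cage forms in three-color within time $O(1/p)$ (the delays caused by sparse green being only local and small), while any green initially outside the cage is at distance $\gtrsim 1/p$ from the origin and cannot reach the origin in time.

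The main obstacle will be the area bound $\E|\mathcal E_0|\le C/p$. The natural blocking region from the multiscale construction of Section~\ref{sec:blue wins} has area on the order of $1/p^{3/2}$, which would only permit the direct argument above for $q\ll p^{3/2}$, stronger than the assumption $q\ll p$. Closing the gap likely requires a finer geometric analysis of the neither-phase configuration, showing that the interlocking blue horizontal and red vertical segments partition the protected region into narrow cells with total area only $O(1/p)$ near the origin. A fallback is a sprinkling argument: condition on a successful two-color blocking, then sprinkle initial green sites at density $q$ and use a union bound to show that no single sprinkled green site can traverse the bounded empty region to reach the origin before the cage closes.
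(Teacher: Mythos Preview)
Your $p\ll q$ argument is essentially the paper's, but condition (iii) does not do what you claim: a red site at $(x_0,L+1)$ (with $|x_0|\le L$, $x_0\neq 0$) is excluded neither by (ii) nor by (iii), yet it can reach row $0$ in column $x_0$ before your green site does and block the geodesic. The fix is trivial: strengthen (ii) to ``no blue or red in $[-3L,3L]^2$'' (the same bound $pL^2\to 0$ applies) and invoke speed-of-light; this is exactly what the paper does with its box $B_{2n}$.

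For $p\gg q$ you have the right skeleton but, as you yourself flag, the central step is missing. Two remarks. First, the monotonicity claim ``blue in three-color $\subset$ blue in two-color'' is not obvious: green blocking red can in principle remove a red obstacle for blue. It does hold here because $\cG\supset\cB\cup\cR$, so wherever green interrupts a red column it continues downward at least as fast; but the paper sidesteps this entirely via speed-of-light (see below), and so can you. Second, and more importantly, the area bound you cannot prove is obtained in the paper from a geometric fact you do not mention: when $q=0$, every connected component of forever-empty sites is a \emph{rectangle}, bounded above and below by blue intervals and on the sides by red intervals (any boundary site of an empty component with both a horizontal and a vertical neighbour inside would get coloured). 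Given this, tile by $n\times n$ boxes with $n\asymp 1/\sqrt{p}$; call a box a \emph{red fence} if it contains an initial red site and neither it nor its four edge-neighbours contain a blue site. Red fences occur with probability bounded away from $0$, are independent at distance $3$, and can never be crossed by a blue line; hence the horizontal side of the empty rectangle through the origin has exponential tails in units of $n$, and likewise for the vertical side via blue fences. This gives diameter $O(1/\sqrt p)$ and area $O(1/p)$ directly, and also bounds the time $T$ at which the rectangle's boundary is fully coloured. Finally, the paper replaces your vague timing argument by a clean speed-of-light step: since $n^2q\to 0$, with high probability there is no initial green in the $2mn\times 2mn$ box about the origin, so the three-color dynamics inside that box up to time $mn\ge T$ coincide with the two-color dynamics, and the origin is trapped empty inside the rectangle. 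Your appeal to the multiscale construction of Section~\ref{sec:blue wins} is a red herring: that machinery is built for one-dimensional versus two-dimensional competition and has no role here.
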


\begin{proof} We denote by $B_n$ the  
$n\times n$ box centered at the origin.

The first statement is easy to prove. Indeed, let $n=n(p)$ be such that $n^2p\to0$ while $n^2q\to\infty$ and let $G$ be the event that
$B_{n}$ contains an initially green site, while $B_{2n}$ initially contains neither a red nor a blue site. Then clearly $\P(G)\to 1$ but 
on $G$ the origin becomes green by time $n$. 

For the converse, by Corollary~\ref{thm:1d three phases} with $\tau=\rho=1$, it suffices to show that the origin is not eventually green with high probability. We now assume  $n=n(p)$ is such that $n^2p\to 1$ while $n^2q\to 0$. Partition the lattice into $n\times n$ boxes with 
the origin in the center of one of them. Call a box {\it a red fence\/}
if it contains an initially red site and no initially blue sites and there are no initially blue sites in any of the 
neighboring four boxes (i.e., those that share an edge).
Define {\it a blue fence\/} analogously. We say that a blue line {\it crosses\/} a box if it connects the left and right sides of the box. 

Observe that a red fence will never be crossed by a blue line. Observe also that a box is a red fence with 
probability bounded away from $0$, and that  boxes at distance at least 3 from each other are red 
fences independently.
It follows that the probability that 
the box centered at the origin is the left endpoint of an interval of $m$ boxes, 
none of which is a red fence, is at most $\exp(-\alpha m)$, where $\alpha>0$. 

Assume now that $q=0$. Then, as it cannot have a boundary site with a
horizontal and a vertical neighbor inside, every connected 
component of forever empty sites is a rectangle, bounded on the top and 
bottom by horizontal intervals of blue sites and on the left and right by 
vertical intervals of red sites. Let $R$ be such connected component of the origin with dimensions $A\times B$. 

Assume that the horizontal dimension is maximal: $A\ge B$. 
Then there is a row of at least $\lfloor A/n\rfloor-2$ boxes, 
none of which is a red fence, that intersects the blue boundary of $R$ but not the red boundary.
Then 
$$
P(A\ge B, A\ge mn)\le \sum_{k\ge m-2} 2k^4e^{-\alpha k}
$$
and then 
$$
P(\max(A,B)\ge mn)\le \sum_{k\ge m-2} 4k^4e^{-\alpha k}.
$$
It follows that 
$$
P(\max(A,B)\le mn)\to 1
$$
as $m\to\infty$.
By the same argument, if $T$ is the time  when all sites on the outer edges of $R$ reach their final state, 
$$P(T\le mn)\to 1$$ as $m\to\infty$. 

\begin{figure}[t!]
    \centering
    \includegraphics[width=0.45\linewidth]{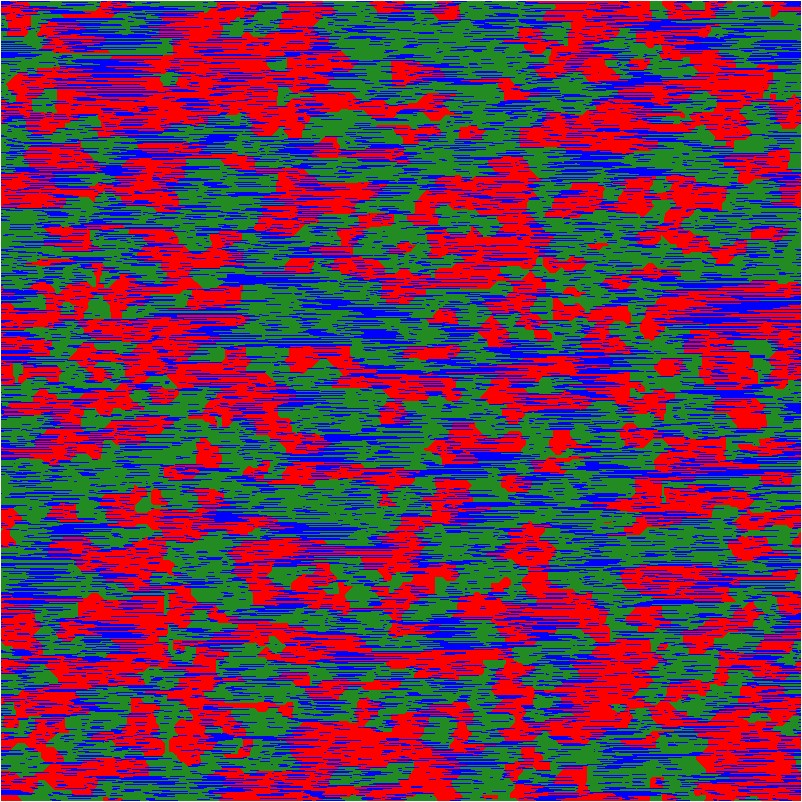}
    \caption{ Three color dynamics with one-dimensional 
    growth by blue and two-dimensional growth by red and  green, with $\cB=\{e_1\}$, $\cR=\cG=\{\pm e_1,\pm e_2\}$, 
    $p_b=0.02$, $p_r=p_g=0.001$.}
    \label{fig:3-color-221}
\end{figure}

We now remove the assumption that $q=0$, and define the configuration 
in two stages: start by deciding on the red and blue sites, then 
independently on potentially green sites with probability $q/(1-2p)$, and then finally define the green sites as those that are potentially green 
but not blue or red. It then follows by the speed-of-light argument that 
\begin{equation*}
\begin{aligned}
&\P(\text{the origin is forever not green})\ge \\
&\qquad\P(\text{there are no potentially green sites in the $2mn\times 2mn$ box around the origin})\\
&\qquad\times \P(\max(A,B)\le mn, T\le mn). 
\end{aligned}
\end{equation*}
Fix $\epsilon>0$, and choose $m$ large so that the second factor is at least $1-\epsilon$ for all sufficiently small $p$. Then for this choice of $m$, the first factor tends to $1$ as $p\to 0$. This completes the proof.
\end{proof}

\begin{open}
Assume the setting of Theorem~\ref{thm:3-color}, but now $q=cp$, for a fixed number $c$. Does the limit 
$$
\lim_{p\to 0} \P(\text{the origin is eventually green})
$$
exist?
\end{open}

We also ask what happens when red and blue grow in only one direction. The argument for Theorem~\ref{thm:3-color} breaks down in this case, since the connected components of empty sites when $p_g=0$ are no longer rectangles. See Figure~\ref{fig:3-color-211} (right). 

\begin{open}
    Assume the setting of Theorem~\ref{thm:3-color} except that $\cB = \{e_1\}$ and $\cR = \{e_2\}$. What relationship between $p$ and $q$ results in $\P(\text{the origin is eventually green})\to 1$?
\end{open}

One way to attack the first two open problems would be to try to define a
limiting continuum object to which final configurations converge when suitably scaled as 
$p\to 0$. This seems to be a substantial challenge; we end with another 
question in this vein that might be more accessible. Assume now that 
$\cB=\{e_1\}$, $\cR=\cG=\{\pm e_1,\pm e_2\}$, $p_r=p_g=p$ and $q=p^\gamma$ with $\gamma>3/2$. Observe that Theorem~\ref{thm:1vrho}
implies that the density of blue sites in the final configuration 
vanishes as $p\to 0$. Thus, the red and green sites divide the space in 
a sort of ``dynamic Voronoi tessellation'' (by contrast to the ``static'' one discussed in \cite{GG} when $q=0$). See Figure~\ref{fig:3-color-221}.

\begin{open}
Does this final configuration, scaled by $1/\sqrt{p}$, converge to 
a random two-coloring of $\bR^2$? 
\end{open}

\section*{Acknowledgments}
JG was partially supported by the Slovenian Research Agency research program P1-0285 and
Simons Foundation Award \#709425. DS was partially supported by the NSF TRIPODS grant
CCF–1740761.

\end{document}